\newcommand\Z{\mathbb{Z}}
\newcommand\Q{\mathbb{Q}}
\newcommand\N{\mathbb{N}}
\newcommand\kk{\Bbbk}
\newcommand\dif{\mathrm{d}}
\newcommand\fh{\mathfrak{h}}
\newcommand\balpha{{\boldsymbol\alpha}}
\newcommand\bbeta{{\boldsymbol\beta}}
\newcommand\blambda{{\boldsymbol\lambda}}
\newcommand\bmu{{\boldsymbol\mu}}
\newcommand\bnu{{\boldsymbol\nu}}
\newcommand\bomega{{\boldsymbol\omega}}
\newcommand\bm{{\mathbf{m}}}
\newcommand\Sy{\mathrm{Sym}}
\newcommand\prt{\mathrm{part}}
\newcommand\clr{\mathrm{color}}
\newcommand\ts{}
\DeclareMathOperator{\qdim}{grdim}
\newtheorem{theo}{Theorem}[section]
\newtheorem{prop}[theo]{Proposition}
\newtheorem{lem}[theo]{Lemma}
\theoremstyle{definition}
\newtheorem{defin}[theo]{Definition}
\newtheorem{rem}[theo]{Remark}
\numberwithin{equation}{section}
  \newcommand{\comments}[1]{
    \begin{center}
      \parbox{6.5 in}{
        \color{blue}
          {\footnotesize \textbf{Comments:} #1}
        \color{black}}
    \end{center}}
  \newcommand{\comments}[1]{}
  \newcommand{\details}[1]{
      \ \\
      \color{OliveGreen}
        {\footnotesize \textbf{Details:} #1}
      \color{black}
      \\
  }
  \newcommand{\details}[1]{}
  \newcommand{\prelim}{\textsc{Preliminary version} \bigskip}
  \newcommand{\prelim}{}
\begin{document}
%

\title{Integral presentations of quantum lattice {H}eisenberg algebras}

\author{Diego Berdeja Su{\'a}rez}
\address{D.~Berdeja~Su{\'a}rez: Department of Mathematics and Statistics, University of Ottawa}
\email{diegobesu@hotmail.com}

\thanks{This research was supported by the MITACS Globalink program.}

\begin{abstract}
  We give integral presentations of quantum lattice Heisenberg algebras by viewing them as Heisenberg doubles.  Our presentations generalize those appearing previously in the literature.
\end{abstract}

\subjclass[2010]{16T05, 17B35}
\keywords{Heisenberg algebra, Hopf algebra, Heisenberg double, symmetric functions, quantum algebra.}

\prelim

\maketitle
\thispagestyle{empty}
\tableofcontents

\section{Introduction}

The Heisenberg algebra has its origins in the description of the relationship between the momentum and position operators of quantum mechanics, as well as the creation and annihilation operators employed in the description of the quantum harmonic oscillator.  It also plays a vital role in Lie theory, especially in the field of vertex operators.

More recently, mathematicians have been developing so called \emph{categorifications} of the Heisenberg algebra and its quantum analogues (see \cite{Kho14,CL12,LS13,SV12,LS12,SY13,RS15a,RS15b}).  In categorification, generators with nice integrality properties are often an important ingredient.  The usual presentation of the Heisenberg algebra is in terms of a generating set that does not possess these properties.  Thus, the categorification literature often works in a different presentation.

The goal of the current paper is to develop explicit formulas for integral presentations of quantum lattice Heisenberg algebras (the Heisenberg algebras appearing most often in categorification).  While some special cases of these formulas have appeared before in the literature, the description in the current paper is more complete.  We obtain the integral presentations by adopting the point of the view of the Heisenberg double.  In particular, the Heisenberg algebra is the Heisenberg double of the Hopf algebra of symmetric functions.  Then our presentations arise from considering the complete and elementary symmetric functions as generators and performing computations in the Heisenberg double.  This approach towards quantum lattice Heisenberg algebras, followed, for example, by Rosso and Savage in \cite{SRH,RS15b}, simplifies proofs.

We begin in Section \ref{sec:1} and Section \ref{sec:2} by reviewing the concepts of Hopf algebras, Hopf pairings, and the ring of symmetric functions.  In particular we show, in Proposition~\ref{prop:extension}, that a Hopf pairing of finitely many copies of the ring of symmetric functions with itself is uniquely determined by its values on the power sum symmetric functions.  In Section \ref{sec:3}, we discuss the graded dimensions of the symmetric and exterior algebras of $\Z$-graded vector spaces, since these algebras appear in our presentations.  Finally, in Section~\ref{sec:4}, we state our main results.  Namely, we give two integral presentations of quantum lattice Heisenberg algebras in Theorems~\ref{theo:product-h-q} and~\ref{theo:product-e-h-q}.  We conclude with pointing out the connection between our results and those appearing in the literature.
\iftoggle{detailsnote}{
\medskip

\paragraph*{\textbf{Note on the arXiv version}} For the interested reader, the tex file of the arXiv version of this paper includes hidden details of some straightforward computations and arguments that are omitted in the pdf file.  These details can be displayed by switching the \texttt{details} toggle to true in the tex file and recompiling.
}{}

\section{Hopf algebras, Hopf pairings, and the Heisenberg double}
\label{sec:1}
In this section we recall the definition of the Heisenberg double.  We begin with the concepts of Hopf algebras and Hopf pairings. For the remainder of the paper, we fix a field $\kk$.  Unless otherwise indicated, vector spaces, tensor products, associative algebras, Hopf algebras, etc.\ are over this field.

\begin{defin}[Graded Hopf algebra]
  A \emph{$\Z$-graded Hopf algebra} is a tuple $(H,\nabla,\epsilon,\Delta,\eta,S)$, where $H = \bigoplus_{c \in \Z} H_c$, with each $H_c$ a finite-dimensional vector space, such that $(H,\nabla,\epsilon)$ is a $\Z$-graded algebra, $(H,\Delta,\eta)$ is a $\Z$-graded coalgebra, and $S \colon H \to H$ is an \emph{antipode}.  This means, in particular, that the following conditions are satisfied:
  \begin{gather*} \ts
    \nabla(H_c \otimes H_d) \subseteq H_{c + d},\quad
    \Delta(H_c) \subseteq \bigoplus_{d \in \Z} H_d \otimes H_{c-d},\quad \\
    \eta(\kk) \subseteq H_0,\quad \epsilon(H_c) = 0 \text{ for } c \ne 0, \\
    \nabla\circ(S\otimes1_H)\circ\Delta=\nabla\circ(1_H\otimes S)\circ\Delta=\epsilon\circ\eta.
  \end{gather*}
\end{defin}

\begin{defin}[Hopf pairing, dual Hopf pair]
  Fix two $\Z$-graded Hopf algebras $H^+$ and $H^-$. Then a bilinear map
  \[
    \langle-,-\rangle\colon H^+\times H^-\longrightarrow \kk
  \]
  is a \emph{Hopf pairing} if
  \[
    \langle-,-\rangle|_{H^+_c\times H^-_d}=0
  \]
  for all $c,d\in\Z$ and $c\neq d$, and we have
  \begin{subequations}
    \begin{align}
      \langle xy,a	\rangle&=\langle x\otimes y, \Delta(a)\rangle,\label{eq:pairingrules}\\
      \langle x,ab\rangle&=\langle \Delta(x),a\otimes b\rangle,\\
      \langle 1_{H^+},a\rangle&=\eta(a),\\
      \langle x,1_{H^-}\rangle&=\eta(x),
    \end{align}
  \end{subequations}
  for all $x,y\in H^+$ and $a,b\in H^-$.  Given such a Hopf pairing, we define
  \[
    \langle-,-\rangle\colon (H^+\otimes H^+)\otimes (H^-\otimes H^-)\longrightarrow \kk,\quad
    \langle x\otimes y,a\otimes b\rangle\mapsto \langle x,a\rangle\langle y,b\rangle.
  \]
  If the Hopf pairing is nondegenerate, we say that $(H^+,H^-)$ are a \emph{dual Hopf pair}.
\end{defin}

When dealing with Hopf algebras, it is often useful to use \emph{Sweedler notation}. For any Hopf algebra $H$, and $a \in H$, we write
\[
  \Delta(a)=\sum_{i}a_{1,i}\otimes a_{2,i}=\sum_{(a)}a_{1}\otimes a_{2} \in H \otimes H,
\]
where $a_{j,i} \in H$ for all $i$ and for $j=1,2$.

\begin{defin}[Heisenberg double]
The \emph{Heisenberg double} $\fh(H^+,H^-)$ associated to a dual Hopf pair $(H^+,H^-)$ is defined as follows. We set $\fh(H^+,H^-)=H^+\otimes H^-$ as $\kk$-modules, and write $a\#x$ for $a\otimes x$ as an element of $\fh(H^+,H^-)$, with $a\in H^+$ and $x\in H^-$. We define the multiplication of two elements of $\fh(H^+,H^-)$ by
\begin{equation}\label{eq:double-mult}
(a\# x)(b\# y)=\sum_{(x),(b)}\langle x_1, b_2 \rangle ab_1\#x_2y,
\end{equation}
extended linearly. It can be shown that this endows $\fh(H^+,H^-)$ with the structure of an associative algebra.
\end{defin}

\section{The ring of symmetric functions}
\label{sec:2}
In this section, we recall some basic facts about symmetric functions.  We refer the reader to \cite[\textsection I.1 and \textsection I.2]{Mac95} for details.

We let $\mathcal{P}$ denote the set of partitions.  We reserve Greek symbols to denote partitions. For $\lambda\in\mathcal{P}$, we let $\lambda_i$ denote the $i$-th part of the partition $\lambda$. We let $|\lambda|$ denote the sum of the parts of $\lambda$, i.e. $|\lambda|=\sum \lambda_i$. We may also write $\lambda\vdash n$ for $|\lambda|=n$. We define $\ell(\lambda)$ to be the length of $\lambda$, i.e., the number of nonzero parts of $\lambda$.  For $k \in \N$, we let $\mu\ominus k$ be the partition $\mu$ with a part $k$ removed. If $\mu$ has no part $k$, then we set $\mu\ominus k$ to be the empty partition. In an analogous manner, we define $\mu\oplus k$ to be the partition $\mu$ with a part $k$ added. We then let $\mu\oplus\lambda=(\cdots(\mu\oplus \lambda_1)\oplus\cdots)\oplus \lambda_{\ell(\lambda)})$. Similarly $\mu\ominus\lambda=(\cdots(\mu\ominus \lambda_1)\ominus\cdots)\ominus \lambda_{\ell(\lambda)})$. We let $m_i(\lambda)$ denote the number of parts of $\lambda$ equal to $i$.

Now fix a finite set $I = \{1,2,\dotsc,|I|\}$.  We let bold Greek symbols denote elements of $\mathcal{P}^{I}$. For $\blambda\in\mathcal{P}^{I}$, we let $\lambda^i$ denote the $i$-th partition, $i\in I$. We set $|\blambda|=\sum_{i,k}\lambda^i_k$ and $\ell(\blambda)=\sum_i \ell(\lambda^i)$. We interpret $\blambda\oplus\bmu$ to be the element of $\mathcal{P}^I$ whose $i$-th partition is $\lambda^i\oplus\mu^i$ for all $i\in I$. Finally, we let $\blambda\oplus (a)^j$ be the element of $\mathcal{P}^I$ whose $i$-th partition is $\lambda^i\oplus \delta_{ij}a$. Note that $\lambda\oplus 0=\lambda$.

Following~\cite[p.~1068]{SRH}, we define $\underline{\blambda}$ to be the sequence of elements of $\N_{>0} \times I$, in lexicographical order, such that the element $(k,i)$ appears $m_k(\lambda^i)$ times.  In other words $\underline{\blambda}$ consists of all the parts of the $\lambda^i$, $i \in I$, with their ``color'' $i$ recorded.

We will write the $n$-th term of the sequence $\underline{\blambda}$ as
\[
  \underline{\blambda}_n = (\prt(\underline{\blambda}_n),\clr(\underline{\blambda}_n)) \in \N_{>0} \times I,\quad \text{for } n = 1,\dotsc,\ell(\blambda).
\]

Let $\Sy$ denote the ring of symmetric functions in countably many variables $x_1, x_2, \dotsc$ with coeffcients in $\mathbb{Z}$ and define $\Sy_\Q = \Q \otimes_\Z \Sy$.  It is known that $\Sy_\Q$ is generated as a $\Q$-algebra by the \emph{power sums}:
\[
  \Sy_\Q = \Q\otimes_\Z\Sy=\Q[p_1,p_2,\dotsc],\quad p_n=\sum_{i=1}^{\infty}x_i^n,\quad n\in \N.
\]

Consider the tensor product Hopf Algebra $\Sy_\Q^{\otimes I}$.  We have that $\Sy_\Q^{\otimes I}$ is generated by $\{p_{n,i} \mid n \in \N_{> 0},\ i \in I\}$, where here, and in what follows, the second subscript indicates the factor in which an element of $\Sy$ lives.  For example, $p_{n,i}$ denotes the simple tensor in $\Sy_Q^{\otimes I}$ with $i$-th component equal to $p_n$ and all other components equal to $1$.

The coproduct on $\Sy_\Q^{\otimes I}$ is determined by
\begin{equation}\label{eq:power-cop}
  \Delta(p_{n,i})=1\otimes p_{n,i}+p_{n,i}\otimes1.
\end{equation}
For $\lambda\in\mathcal{P}$ and $\blambda \in \mathcal{P}^I$, define
\[
  p_\lambda=\prod_{i=1}^{\ell(\lambda)}p_{\lambda_i},\quad p_\blambda = \prod_{i \in I} p_{\lambda^i,i}.
\]
Thus, the $p_\blambda$, $\blambda \in \mathcal{P}^I$, form a basis for $\Sy_\Q^{\otimes I}$.

The \emph{complete symmetric functions} $h_r$, $r\in\N$, and \emph{elementary symmetric functions} $e_n$, $n\in\N$, are defined by
\begin{equation} \label{eq:sum}
  h_r=\sum_{|\lambda|=r}\frac{p_\lambda}{z_\lambda}, \qquad
  e_n=\sum_{\lambda\vdash n}(-1)^{|\lambda|-\ell(\lambda)}\frac{p_\lambda}{z_\lambda},
\end{equation}
where
\begin{equation}
  z_\lambda=\prod_{i\geq 1}i^{m_i(\lambda)} m_i(\lambda)!.\label{eq:zeta}
\end{equation}
By convention, we set $e_0 = p_0 = h_0 = 1$.  We have
\[
  \Sy = \Z[h_1,h_2,\dotsc] = \Z[e_1,e_2,\dotsc,].
\]
In other words, the $h_n$ and $e_n$ generated $\Sy$ over $\Z$, as opposed to the $p_n$, which only generated $\Sy_\Q$ over $\Q$.  We have (see, for example, \cite[Prop.~3.2]{ZM15})
\begin{equation} \label{eq:cop}
  \Delta(h_n)=\sum_{r=0}^{n}h_r\otimes h_{n-r},\qquad
  \Delta(e_n)=\sum_{r=0}^{n}e_r\otimes e_{n-r}.
\end{equation}

Let us now grab two copies of $\Sy_\Q^{\otimes I}$, rename them $H^+$ and $H^-$, and consider the associated Heisenberg double $\fh(H^+,H^-)$. For the rest of the paper, we shall consider bilinear forms on $\Sy_\Q^{\otimes I}\otimes\Sy_\Q^{\otimes I}$ of the type described in the following proposition.

\begin{prop}\label{prop:extension}
  Let $I$ be a finite set. Fix a $C_{j,i}^n\in\kk$ for every $i,j\in I$ and $n\in\N$.  Then there exists a unique Hopf pairing $\langle-,-\rangle\colon \Sy_\Q^{\otimes I}\otimes\Sy_\Q^{\otimes I}\longrightarrow \kk$ such that
  \begin{equation}\label{eq:ortogonality}
    \langle p_{n,j},p_{m,i}\rangle=\delta_{nm}C^n_{j,i}.
  \end{equation}
  This pairing is given by
  \begin{equation}\label{eq:segunda-extrapolacion}
    \langle p_\blambda,p_\bmu\rangle = \delta_{\ell(\blambda),\ell(\bmu)} \sum_{\sigma\in S_{\ell(\blambda)}} \prod_{r=1}^{\ell(\blambda)} \delta_{\prt(\underline{\blambda}_r),\prt(\underline{\bmu}_{\sigma(r)})} C^{\prt(\underline{\blambda}_r)}_{\clr(\underline{\blambda}_r),\clr(\underline{\bmu}_{\sigma(r)})},
  \end{equation}
  for all $\blambda,\bmu\in\mathcal{P}^I$. In particular
  \begin{equation}\label{eq:primera-extrapolacion}
    \langle p_{\lambda,j},p_{\mu,i}\rangle = \delta_{\lambda\mu} \prod_{k\geq1}(C^k_{j,i})^{m_k(\lambda)}m_k(\lambda)!.
  \end{equation}
\end{prop}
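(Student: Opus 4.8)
The plan is to prove uniqueness and existence separately. For \emph{uniqueness}, the key facts are that $\Sy_\Q^{\otimes I}$ is generated as an algebra by the elements $p_{n,i}$, that these are primitive by~\eqref{eq:power-cop}, and that $\eta(p_{n,i})=0$ for $n\ge 1$ (so $\eta$ vanishes on every nonconstant monomial in the $p_{k,i}$). A short computation with the axioms $\langle x,ab\rangle=\langle\Delta(x),a\otimes b\rangle$ and $\langle x,1\rangle=\eta(x)$, using primitivity of $p_{n,i}$, shows that $\langle p_{n,i},p_\bmu\rangle=0$ whenever $\ell(\bmu)\ne 1$ and is given by~\eqref{eq:ortogonality} when $\ell(\bmu)=1$. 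Now fix $\blambda$ with $\ell(\blambda)\ge 1$, split off the part $(k,i):=\underline{\blambda}_1$ so that $p_\blambda=p_{k,i}\,p_{\blambda'}$ where $\blambda'$ is $\blambda$ with that part deleted, and apply~\eqref{eq:pairingrules}: writing $\Delta(p_\bmu)=\sum_{S\subseteq\{1,\dots,\ell(\bmu)\}}\bigl(\prod_{s\in S}p_{\underline{\bmu}_s}\bigr)\otimes\bigl(\prod_{s\notin S}p_{\underline{\bmu}_s}\bigr)$ and discarding (by the previous observation) all terms with $|S|\ne 1$, one obtains the recursion
\begin{equation*}
  \langle p_\blambda,p_\bmu\rangle=\sum_{s=1}^{\ell(\bmu)}\delta_{k,\,\prt(\underline{\bmu}_s)}\;C^{k}_{i,\,\clr(\underline{\bmu}_s)}\;\langle p_{\blambda'},p_{\bnu_s}\rangle,
\end{equation*}
where $\bnu_s$ is $\bmu$ with the part $\underline{\bmu}_s$ deleted, with base case $\langle 1,p_\bmu\rangle=\eta(p_\bmu)=\delta_{0,\ell(\bmu)}$. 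Iterating, a surviving term requires $\ell(\blambda)=\ell(\bmu)$ and is indexed by a bijection $\sigma$ matching the parts of $\blambda$ with those of $\bmu$; summing the resulting contributions over all $\sigma$ reproduces exactly~\eqref{eq:segunda-extrapolacion}, so the Hopf pairing is uniquely determined by~\eqref{eq:ortogonality}.

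For \emph{existence}, define a $\kk$-bilinear form on $\Sy_\Q^{\otimes I}$ by declaring~\eqref{eq:segunda-extrapolacion} on the basis $\{p_\blambda\}_{\blambda\in\mathcal P^I}$ and verify the axioms. The useful observation is that, when $\ell(\blambda)=\ell(\bmu)=\ell$, the right-hand side of~\eqref{eq:segunda-extrapolacion} is the permanent $\operatorname{perm}(M)=\sum_{\sigma\in S_\ell}\prod_r M_{r,\sigma(r)}$ of the $\ell\times\ell$ matrix $M=M(\blambda,\bmu)$ with entries $M_{r,s}=\delta_{\prt(\underline{\blambda}_r),\prt(\underline{\bmu}_s)}\,C^{\prt(\underline{\blambda}_r)}_{\clr(\underline{\blambda}_r),\clr(\underline{\bmu}_s)}$, and that the permanent is invariant under permuting rows and columns of $M$. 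The grading condition then holds because if $|\blambda|\ne|\bmu|$ no $\sigma$ can satisfy $\prt(\underline{\blambda}_r)=\prt(\underline{\bmu}_{\sigma(r)})$ for all $r$, so every summand in~\eqref{eq:segunda-extrapolacion} vanishes; and the unit conditions hold because $p_\emptyset=1$ and $\langle p_\emptyset,p_\bmu\rangle=\delta_{0,\ell(\bmu)}=\eta(p_\bmu)$ (and symmetrically). For~\eqref{eq:pairingrules} one uses $p_\balpha p_\bbeta=p_{\balpha\oplus\bbeta}$ and computes $\langle p_{\balpha\oplus\bbeta},p_\bmu\rangle$, by row-invariance, as $\operatorname{perm}(M)$ with the rows coming from parts of $\balpha$ listed before those coming from parts of $\bbeta$; the Laplace-type expansion of this permanent along the corresponding splitting of the rows, $\operatorname{perm}(M)=\sum_{S}\operatorname{perm}(M[\text{$\balpha$-rows},S])\,\operatorname{perm}(M[\text{$\bbeta$-rows},S^c])$ over column subsets $S$, is precisely $\sum_S\langle p_\balpha,p_{\bmu|_S}\rangle\,\langle p_\bbeta,p_{\bmu|_{S^c}}\rangle=\langle p_\balpha\otimes p_\bbeta,\Delta(p_\bmu)\rangle$, where $p_{\bmu|_S}:=\prod_{s\in S}p_{\underline{\bmu}_s}$. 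The companion identity $\langle x,ab\rangle=\langle\Delta(x),a\otimes b\rangle$ follows in the same way, expanding the permanent along the analogous splitting of its \emph{columns}. Finally,~\eqref{eq:primera-extrapolacion} is the case $p_\blambda=p_{\lambda,j}$, $p_\bmu=p_{\mu,i}$ supported in single colors: every entry of $M$ is then $\delta_{\lambda_r,\mu_s}\,C^{\lambda_r}_{j,i}$, so $\operatorname{perm}(M)=0$ unless $\lambda=\mu$, and for $\lambda=\mu$ the surviving permutations are exactly those preserving each block of equal parts, of which there are $\prod_k m_k(\lambda)!$, each contributing $\prod_k(C^k_{j,i})^{m_k(\lambda)}$.

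The step requiring the most care is the existence argument: making the identification of $\langle p_\blambda,p_\bmu\rangle$ with the permanent of a concrete matrix fully precise — correctly tracking colors and repeated parts through the notation $\underline{\blambda}$ — and invoking the row- and column-Laplace expansions cleanly; there are no signs to manage precisely because these are permanents, which is exactly what makes the colored setting go through. Should this bookkeeping become unwieldy, one can instead build the pairing abstractly as $\langle a,b\rangle=\phi(a)(b)$, where $\phi$ is the algebra homomorphism from $\Sy_\Q^{\otimes I}$ to its graded dual (with scalars extended to $\kk$) sending each generator $p_{n,i}$ to the primitive functional $\sum_{j\in I}C^n_{i,j}\,p_{n,j}^{*}$, with $p_{n,j}^{*}$ dual to $p_{n,j}$ in the basis $\{p_\bmu\}$ (which one checks is indeed primitive). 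Since both sides of the coproduct- and counit-compatibility identities for $\phi$ are algebra homomorphisms agreeing on the algebra generators $p_{n,i}$, the map $\phi$ is automatically a morphism of Hopf algebras; hence $\langle-,-\rangle$ is a Hopf pairing, and it satisfies~\eqref{eq:ortogonality} by construction.
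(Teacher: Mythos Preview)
Your argument is correct and, at its core, coincides with the paper's: both verify the Hopf pairing axiom $\langle xy,a\rangle=\langle x\otimes y,\Delta(a)\rangle$ on the basis $\{p_\blambda\}$ by expanding $\Delta(p_\bmu)$ as a sum over splittings of the parts of $\bmu$ and matching with the definition~\eqref{eq:segunda-extrapolacion}. The paper writes this out directly in terms of the binomial-coefficient form of $\Delta(p_\bmu)$ and a double sum over $S_{\ell(\blambda)}\times S_{\ell(\bnu)}$, then collapses it to a single sum over $S_{\ell(\bmu)}$; your recasting of~\eqref{eq:segunda-extrapolacion} as $\operatorname{perm}\bigl(M(\blambda,\bmu)\bigr)$ and use of the row/column Laplace expansion is exactly the same computation in cleaner packaging, with the bonus that the companion axiom $\langle x,ab\rangle=\langle\Delta(x),a\otimes b\rangle$ follows by the symmetric column expansion rather than being left implicit. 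Your uniqueness argument is more explicit than the paper's (which simply observes that the axiom reduces pairings to ones of strictly smaller length), and your alternative existence proof via the Hopf algebra map $\phi\colon \Sy_\Q^{\otimes I}\to(\Sy_\Q^{\otimes I})^*$ sending $p_{n,i}\mapsto\sum_j C^n_{i,j}\,p_{n,j}^*$ is a genuinely different route that the paper does not take; it has the advantage of avoiding all index bookkeeping, at the cost of needing the (easy) check that each $p_{n,j}^*$ is primitive in the graded dual.
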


\begin{proof}
  The coproduct of the $p_\lambda$ is given by (cf.~\cite[Eq.~3.2]{Mac95})
  \[
    \Delta(p_\lambda)=\sum_{\mu\oplus\nu=\lambda}p_\mu\otimes p_\nu\prod_{i=1}^{\lambda_1}\binom{m_i(\lambda)}{m_i(\mu)}.
  \]
  Thus, we have
  \begin{align*}
    \langle &p_\blambda \otimes p_\bnu, \Delta(p_\bmu) \rangle \\
    &= \sum_{\balpha \oplus \bbeta = \bmu} \langle p_\blambda, p_\balpha \rangle \langle p_\bnu, p_\bbeta \rangle \prod_{j\in I} \prod_{k\geq1} \binom{m_k(\mu^j)}{m_k(\alpha^j)} \\
    &= \sum_{\balpha \oplus \bbeta = \bmu}
    \delta_{\ell(\blambda), \ell(\balpha)}
    \delta_{\ell(\bnu), \ell(\bbeta)}
    \prod_{j\in I} \prod_{k\geq1} \binom{m_k(\mu^j)}{m_k(\alpha^j)} \\
    &\qquad \qquad \sum_{\substack{ \sigma \in S_{\ell(\blambda)} \\ \tau \in S_{\ell(\bnu)}}} \prod_{r=1}^{\ell(\blambda)} \prod_{s=1}^{\ell(\bnu)} \delta_{\prt(\underline{\blambda}_r),\prt(\underline{\balpha}_{\sigma(r)})} \delta_{\prt(\underline{\bnu}_s),\prt(\underline{\bbeta}_{\tau(s)})} C^{\prt(\underline{\blambda}_r)}_{\clr(\underline{\blambda}_r),\clr(\underline{\balpha}_{\sigma(r)})} C^{\prt(\underline{\bnu}_s)}_{\clr(\underline{\bnu}_s),\clr(\underline{\bbeta}_{\tau(s)})} \\
    &= \sum_{\balpha \oplus \bbeta = \bmu}
    \delta_{\ell(\blambda), \ell(\balpha)} \delta_{\ell(\bnu), \ell(\bbeta)}
    \prod_{j\in I} \prod_{k\geq1} \binom{m_k(\mu^j)}{m_k(\alpha^j)}
    \sum_{\sigma \in S_{\ell(\blambda)} \times S_{\ell(\bnu)}}
    \prod_{r=1}^{\ell(\blambda \oplus \bnu)}
    \delta_{\prt(\underline{\blambda \oplus \bnu}_r), \prt(\underline{\bmu}_{\sigma(r)})}
    C^{\prt(\underline{\blambda \oplus \bnu}_r)}_{\clr(\underline{\blambda \oplus \bnu}_r), \clr(\underline{\bmu}_{\sigma(r)})} \\
    &= \delta_{\ell(\blambda \oplus \bnu), \ell(\bmu)}
    \sum_{\sigma \in S_{\ell(\bmu)}}
    \prod_{r=1}^{\ell(\blambda \oplus \bnu)}
    \delta_{\prt(\underline{\blambda \oplus \bnu}_r), \prt(\underline{\bmu}_{\sigma(r)})}
    C^{\prt(\underline{\blambda \oplus \bnu}_r)}_{\clr(\underline{\blambda \oplus \bnu}_r), \clr(\underline{\bmu}_{\sigma(r)})} \\
    &= \langle p_{\blambda \oplus \bmu}, p_\bnu \rangle = \langle p_\blambda p_\bmu, p_\bnu \rangle.
  \end{align*}
  Thus, \eqref{eq:segunda-extrapolacion} is a Hopf pairing.  It is clear that \eqref{eq:segunda-extrapolacion} reduces to \eqref{eq:primera-extrapolacion} and  \eqref{eq:ortogonality}.

  Now, the relation $\langle p_\blambda p_\bmu, p_\bnu \rangle = \langle p_\blambda \otimes p_\bnu, \Delta(p_\mu) \rangle$ implies that the pairing between any $p_\blambda$ and $p_\bmu$, where $\blambda$ and $\bmu$ have length greater than one, is determined by the pairing between $p_\bnu$ of strictly smaller length.  It follows by induction that the pairing is uniquely determined by \eqref{eq:ortogonality}.
\end{proof}

For $a \in \Sy_\Q$, we will write $a^\pm$ for the corresponding element of $H^\pm$. We regard $H^+$ and $H^-$ as subalgebras of $\fh(H^+,H^-)$, via the natural maps $a^+\mapsto a\#1$ and $a^-\mapsto 1\#a$ for all $a\in \Sy_\Q^{\otimes I}$.

\begin{rem}\label{rem:involution}
  The ring isomorphism $\Omega\colon \Sy_\Q\longrightarrow \Sy_\Q$, $p_n\mapsto (-1)^{n-1}p_n$ for all $n\geq1$, is an algebra involution. This follows from the fact that $\Omega$ simply rescales the variables $p_n$ of the polynomial algebra $\Sy_\Q$, and any such rescaling is an algebra isomorphism.  We denote the induced isomorphisms of $\Sy_\Q^{\otimes I}$ and $\Sy_\Q^{\otimes I} \otimes \Sy_\Q^{\otimes I}$ again by $\Omega$.
\end{rem}

\begin{lem}\label{lem:change}
  We have $\Omega(h_n)=e_n$ and $\Omega(e_n)=h_n$ for all $n \in \N$.
\end{lem}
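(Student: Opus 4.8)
The plan is to compute directly, using that $\Omega$ is an algebra homomorphism and that the $p_\lambda$ form a basis of $\Sy_\Q$. Since $\Omega(p_n) = (-1)^{n-1}p_n$ and $p_\lambda = \prod_{i=1}^{\ell(\lambda)} p_{\lambda_i}$, multiplicativity of $\Omega$ gives
\[
  \Omega(p_\lambda) = \prod_{i=1}^{\ell(\lambda)} (-1)^{\lambda_i - 1} p_{\lambda_i} = (-1)^{\sum_i (\lambda_i - 1)} p_\lambda = (-1)^{|\lambda| - \ell(\lambda)} p_\lambda .
\]
So the sign $(-1)^{|\lambda|-\ell(\lambda)}$ appearing in the definition of $e_n$ in \eqref{eq:sum} is exactly the eigenvalue of $p_\lambda$ under $\Omega$.

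First I would substitute this into the defining sum $h_n = \sum_{|\lambda|=n} p_\lambda/z_\lambda$ from \eqref{eq:sum}: applying $\Omega$ termwise and pulling the scalar $1/z_\lambda$ through yields $\Omega(h_n) = \sum_{|\lambda|=n} (-1)^{|\lambda|-\ell(\lambda)} p_\lambda/z_\lambda$, which is precisely $e_n$. Then, for the second identity, I would apply $\Omega$ to $e_n = \sum_{\lambda \vdash n} (-1)^{|\lambda|-\ell(\lambda)} p_\lambda / z_\lambda$; each term acquires an additional factor $(-1)^{|\lambda|-\ell(\lambda)}$, and since $(-1)^{2(|\lambda|-\ell(\lambda))} = 1$ these cancel, leaving $\Omega(e_n) = \sum_{\lambda\vdash n} p_\lambda / z_\lambda = h_n$. (Alternatively, the second identity follows from the first together with the fact, noted in Remark~\ref{rem:involution}, that $\Omega$ is an involution, so $\Omega(e_n) = \Omega(\Omega(h_n)) = h_n$; I would mention this as the quicker route.) The case $n = 0$ is the convention $e_0 = h_0 = 1$ and $\Omega(1) = 1$.

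There is no real obstacle here: the only point requiring any care is the bookkeeping of signs, i.e. verifying $\Omega(p_\lambda) = (-1)^{|\lambda|-\ell(\lambda)} p_\lambda$, which follows immediately from multiplicativity. Everything else is substitution into \eqref{eq:sum}.
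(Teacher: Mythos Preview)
Your proof is correct and essentially identical to the paper's own argument: the paper states that the result follows directly from \eqref{eq:sum}, and the hidden details carry out exactly the computation you describe, namely $\Omega(p_\lambda) = (-1)^{|\lambda|-\ell(\lambda)} p_\lambda$ substituted into the expansion of $h_n$. Your alternative route for the second identity via the involution property is a valid shortcut that the paper does not explicitly mention.
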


\begin{proof}
  The proof follows directly from \eqref{eq:sum}.
  \details{
  \begin{align*}
  \Omega(h_n)&~~=\Omega\left(\sum_{\lambda\vdash n}\frac{p_\lambda}{z_\lambda}\right)=\sum_{\lambda\vdash n}\frac{1}{z_\lambda}\prod_{i=1}^{\ell(\lambda)}\Omega(p_{\lambda_i})=\sum_{\lambda\vdash n}\frac{1}{z_\lambda}\prod_{i=1}^{\ell(\lambda)}(-1)^{\lambda_i-1}p_{\lambda_i},\\
  &~~=\sum_{\lambda\vdash n}(-1)^{\sum_{i=1}^{\ell(\lambda)}(\lambda_i-1)}\frac{p_\lambda}{z_\lambda}=\sum_{\lambda\vdash n}(-1)^{|\lambda|-\ell(\lambda)}\frac{p_\lambda}{z_\lambda},\\
  &\stackrel{\eqref{eq:sum}}{=}e_n.
  \end{align*}}
\end{proof}

\begin{lem}\label{lem:preserves}
  The map $\Omega$ preserves any bilinear form $\langle - , - \rangle \colon H^+ \times H^- \longrightarrow \kk$ for which the $p_\blambda$ are orthogonal.
\end{lem}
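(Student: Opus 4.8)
The plan is to reduce to a computation on the basis $\{p_\blambda \mid \blambda \in \cP^I\}$ and exploit the fact that $\Omega$ acts diagonally on this basis by a sign that squares to $1$.

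First I would recall, from the definition of $\Omega$ on $\Sy_\Q^{\otimes I}$ in Remark~\ref{rem:involution}, that $\Omega(p_{n,i}) = (-1)^{n-1} p_{n,i}$, and hence, since $\Omega$ is an algebra homomorphism and $p_\blambda$ is the product of the $p_{\lambda^i_k,i}$ over all parts, that
\[
  \Omega(p_\blambda) = (-1)^{|\blambda| - \ell(\blambda)} p_\blambda \qquad \text{for all } \blambda \in \cP^I,
\]
the exponent being $\sum_{i,k}(\lambda^i_k - 1)$. Next, by bilinearity of $\langle-,-\rangle$ and of $\Omega \otimes \Omega$, it suffices to check that $\langle \Omega(p_\blambda), \Omega(p_\bmu) \rangle = \langle p_\blambda, p_\bmu \rangle$ for all $\blambda, \bmu \in \cP^I$. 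Using the displayed formula, the left-hand side equals $(-1)^{|\blambda| - \ell(\blambda)}(-1)^{|\bmu| - \ell(\bmu)} \langle p_\blambda, p_\bmu \rangle$.

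Now I would split into cases. If $\blambda \ne \bmu$, then by the orthogonality hypothesis $\langle p_\blambda, p_\bmu \rangle = 0$, and the equality holds trivially (both sides vanish). If $\blambda = \bmu$, then the sign in front is $(-1)^{2(|\blambda| - \ell(\blambda))} = 1$, so again the two sides agree. This establishes the lemma.

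There is essentially no obstacle here: the only point requiring a moment's care is recording correctly the sign by which $\Omega$ scales $p_\blambda$ — namely that it is $(-1)^{|\blambda|-\ell(\blambda)}$ rather than, say, $(-1)^{|\blambda|}$ — and then noticing that on the diagonal, where orthogonality permits the pairing to be nonzero, this sign is a square and hence trivial. If desired, one can phrase the whole argument slightly more conceptually by saying that $\Omega$ is the linear map sending $p_\blambda \mapsto (-1)^{|\blambda|-\ell(\blambda)} p_\blambda$, so it is ``self-adjoint'' with respect to any form diagonalized by the $p_\blambda$, and an involution-type rescaling preserves such a form.
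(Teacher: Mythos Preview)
Your proof is correct and is essentially identical to the paper's own argument: both compute $\Omega(p_\blambda)=(-1)^{|\blambda|-\ell(\blambda)}p_\blambda$ and then use orthogonality of the $p_\blambda$ to reduce to the diagonal, where the sign squares to $1$. The only difference is cosmetic---you spell out the case split $\blambda\ne\bmu$ versus $\blambda=\bmu$, while the paper compresses this into the single observation that $\langle p_\blambda,p_\bmu\rangle$ is proportional to $\delta_{\blambda\bmu}$.
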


\begin{proof}
  We have
  \begin{align*}
    \Omega(p_\blambda)&=\Omega\left(\prod_{i\in I}\prod_{k=1}^{\ell(\lambda^i)}p_{\lambda_k^i}\right)=\prod_{i\in I}\prod_{k=1}^{\ell(\lambda^i)}(-1)^{\lambda_k^i-1}p_{\lambda_k^i}=(-1)^{|\blambda|-\ell(\blambda)}p_\blambda.
  \end{align*}
  Thus, since $\langle p_\blambda,p_\bmu\rangle\in\delta_{\blambda\bmu}\Q(q)$,
  \begin{align*}
    \langle \Omega(p_\blambda),\Omega(p_\bmu)\rangle&=(-1)^{2(|\blambda|-\ell(\lambda))}\langle p_\blambda,p_\bmu\rangle=\langle p_\blambda,p_\bmu\rangle.\qedhere
  \end{align*}
\end{proof}

\begin{lem}\label{lem:compatibility}
The map $\Omega$ is compatible with the coproduct; i.e.
\[
  \Delta(\Omega(a)) = \Omega(\Delta(a)),\quad a\in\Sy_\Q^{\otimes I}.
\]
\end{lem}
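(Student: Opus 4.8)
The plan is to reduce the claim to a statement about the generators $p_{n,i}$ of $\Sy_\Q^{\otimes I}$, since $\Omega$, $\Delta$ are both algebra morphisms (into $\Sy_\Q^{\otimes I}$ and $\Sy_\Q^{\otimes I}\otimes\Sy_\Q^{\otimes I}$ respectively, the latter with its tensor-product algebra structure) and $\Sy_\Q^{\otimes I}$ is generated as a $\Q$-algebra by the $p_{n,i}$. Concretely, I would first observe that both $\Delta\circ\Omega$ and $\Omega\circ\Delta$ are algebra homomorphisms $\Sy_\Q^{\otimes I}\to\Sy_\Q^{\otimes I}\otimes\Sy_\Q^{\otimes I}$: for $\Delta\circ\Omega$ this is the composite of two algebra maps; for $\Omega\circ\Delta$ one uses that $\Delta$ is an algebra map and that $\Omega\otimes\Omega$ (the induced map on the tensor square, still denoted $\Omega$ in Remark~\ref{rem:involution}) is an algebra map. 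Hence it suffices to check that the two composites agree on each generator $p_{n,i}$.

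The second step is the one-line computation on generators. By definition $\Omega(p_{n,i})=(-1)^{n-1}p_{n,i}$, so using \eqref{eq:power-cop},
\[
  \Delta(\Omega(p_{n,i})) = (-1)^{n-1}\Delta(p_{n,i}) = (-1)^{n-1}\bigl(1\otimes p_{n,i} + p_{n,i}\otimes 1\bigr).
\]
On the other hand, applying $\Omega$ (i.e.\ $\Omega\otimes\Omega$) to $\Delta(p_{n,i})=1\otimes p_{n,i}+p_{n,i}\otimes 1$ gives $1\otimes (-1)^{n-1}p_{n,i} + (-1)^{n-1}p_{n,i}\otimes 1$, which is the same element. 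So $\Delta\circ\Omega$ and $\Omega\circ\Delta$ agree on generators, hence on all of $\Sy_\Q^{\otimes I}$.

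The only genuine subtlety — and the step I would be most careful about — is making sure the bookkeeping about \emph{which} copy of $\Omega$ is meant is airtight: Remark~\ref{rem:involution} defines $\Omega$ on $\Sy_\Q$, on $\Sy_\Q^{\otimes I}$, and on $\Sy_\Q^{\otimes I}\otimes\Sy_\Q^{\otimes I}$, and the identity in the lemma implicitly uses the first two of these on the left-hand side of the equation inside $\Delta(\Omega(a))$ and the third on $\Omega(\Delta(a))$. I would state explicitly that the $\Omega$ on $\Sy_\Q^{\otimes I}\otimes\Sy_\Q^{\otimes I}$ is $\Omega\otimes\Omega$ where each factor is the $\Omega$ on $\Sy_\Q^{\otimes I}$, and that this is indeed an algebra homomorphism because a tensor product of algebra homomorphisms is one. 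With that convention pinned down, there is no real obstacle: the result is an instance of the standard principle that two algebra maps out of a free commutative algebra (here $\Sy_\Q^{\otimes I}=\Q[p_{n,i}]$) coincide as soon as they coincide on the polynomial generators.
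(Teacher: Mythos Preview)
Your proof is correct and follows the same overall strategy as the paper: both reduce to checking the identity on a generating set, using that $\Delta\circ\Omega$ and $\Omega\circ\Delta$ are algebra homomorphisms. The only difference is the choice of generators. You work with the power sums $p_{n,i}$, on which $\Omega$ acts by the scalar $(-1)^{n-1}$ and $\Delta$ is given by~\eqref{eq:power-cop}, making the verification immediate. The paper instead checks on the $h_n$, invoking Lemma~\ref{lem:change} and the coproduct formulas~\eqref{eq:cop} for $h_n$ and $e_n$. Your route is marginally more direct, since it avoids the detour through Lemma~\ref{lem:change}; the paper's route has the minor advantage of staying within the integral generators, though that plays no role here since the statement is over $\Q$.
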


\begin{proof}
  Since $\Omega$ and $\Delta$ are both algebra homomorphisms, we need only prove the claim on a generating set, and we choose the $h_n$.  Since the coproduct of a bialgebra (in this case a Hopf algebra) is an algebra homomorphism, we have
  \[
    \Omega \left(\Delta(h_n)\right)
    \stackrel{\eqref{eq:cop}}{=} \Omega\left(\sum_{r=0}^nh_r\otimes h_{n-r}\right)
    =\sum_{r=0}^ne_r\otimes e_{n-r}
    = \Delta(e_n)
    = \Delta\left(\Omega(h_n)\right).\qedhere
  \]
\end{proof}

\begin{prop}\label{prop:endomorphism}
The map $\Omega$ is an algebra endomorphism of the Heisenberg double $\fh(H^+,H^-)$, i.e. $\Omega(a\#b)\Omega(c\#d)=\Omega((a\#b)(c\#d))$ for all $a,b,c,d\in\Sy_\Q$.
\end{prop}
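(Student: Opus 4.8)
The plan is to unwind both sides of the claimed identity using the definition \eqref{eq:double-mult} of the product on $\fh(H^+,H^-)$ and then to match the two expressions term by term, the matching being supplied by Lemmas~\ref{lem:preserves} and~\ref{lem:compatibility} together with the fact (Remark~\ref{rem:involution}) that $\Omega$ is an algebra homomorphism. To begin, I recall that $\fh(H^+,H^-)=H^+\otimes H^-$ as $\kk$-modules and that $\Omega$ acts on each tensor factor, so the endomorphism in question is $\Omega(a\#b)=\Omega(a)\#\Omega(b)$ for $a\in H^+$, $b\in H^-$. Since both sides of the asserted equality are linear in $a,b,c,d$, it suffices to check $\Omega(a\#b)\,\Omega(c\#d)=\Omega\bigl((a\#b)(c\#d)\bigr)$ for all $a,b,c,d\in\Sy_\Q^{\otimes I}$.

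For the right-hand side, applying \eqref{eq:double-mult} and then the linear map $\Omega$ gives
\[
  \Omega\bigl((a\#b)(c\#d)\bigr)=\sum_{(b),(c)}\langle b_1,c_2\rangle\,\Omega(ac_1)\#\Omega(b_2 d).
\]
For the left-hand side, applying \eqref{eq:double-mult} to $(\Omega(a)\#\Omega(b))(\Omega(c)\#\Omega(d))$ produces a sum over the Sweedler components of $\Omega(b)$ and $\Omega(c)$. By Lemma~\ref{lem:compatibility} we have $\Delta\circ\Omega=\Omega\circ\Delta$, where the $\Omega$ on $\Sy_\Q^{\otimes I}\otimes\Sy_\Q^{\otimes I}$ acts componentwise; hence the Sweedler components of $\Omega(b)$ and $\Omega(c)$ may be taken to be $\Omega(b_1)\otimes\Omega(b_2)$ and $\Omega(c_1)\otimes\Omega(c_2)$, so that
\[
  \Omega(a\#b)\,\Omega(c\#d)=\sum_{(b),(c)}\langle \Omega(b_1),\Omega(c_2)\rangle\,\Omega(a)\Omega(c_1)\#\Omega(b_2)\Omega(d).
\]

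It then remains to compare the two sums. By Lemma~\ref{lem:preserves} (whose proof applies here since, by \eqref{eq:segunda-extrapolacion}, $\langle p_\blambda,p_\bmu\rangle\neq0$ forces $\blambda$ and $\bmu$ to have equal multisets of parts, so the relevant sign factors cancel) we get $\langle\Omega(b_1),\Omega(c_2)\rangle=\langle b_1,c_2\rangle$, and since $\Omega$ is an algebra homomorphism we also have $\Omega(a)\Omega(c_1)=\Omega(ac_1)$ and $\Omega(b_2)\Omega(d)=\Omega(b_2 d)$. Making these three substitutions turns the expression for the left-hand side into the one for the right-hand side term by term, which proves the proposition. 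I do not expect a genuine obstacle here: the argument is essentially bookkeeping with Sweedler notation, and the only point that requires care is invoking Lemma~\ref{lem:compatibility} in the form $\Delta\circ\Omega=\Omega\circ\Delta$ (with $\Omega$ acting componentwise on the codomain of $\Delta$) \emph{before} any pairing is evaluated, so that the reindexing of Sweedler sums is legitimate.
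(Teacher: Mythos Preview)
Your proof is correct and follows essentially the same approach as the paper: expand both sides via the Heisenberg double multiplication \eqref{eq:double-mult}, then use Lemma~\ref{lem:compatibility} to pass $\Omega$ through $\Delta$, Lemma~\ref{lem:preserves} to pass $\Omega$ through the pairing, and the fact that $\Omega$ is an algebra homomorphism to handle the products. The only cosmetic difference is that the paper restricts to basis elements $p_\blambda\#p_\bmu$ whereas you argue directly for arbitrary $a,b,c,d$; your parenthetical remark on Lemma~\ref{lem:preserves} is in fact slightly more careful than the paper, since the pairing \eqref{eq:segunda-extrapolacion} need not be diagonal in the $p_\blambda$, but what actually matters (and what you observe) is that nonvanishing forces equal part multisets, hence equal sign factors.
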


\begin{proof}
  Since $\{p_\blambda\}_{\blambda\in\mathcal{P}^I}$ is a basis for $\Sy_\Q$, a basis for $\fh(H^+,H^-)$ is $\{p_\blambda\#p_\bmu\}_{\blambda,\bmu\in\mathcal{P}^I}$, and we only need to prove the proposition for the elements of the latter. We have
  \begin{align*}
    \Omega((p_\blambda\#p_\bmu)( p_\bnu\#p_\bomega))&=\Omega\left(\sum_{(p_\bmu),(p_\bnu)}\langle (p_\bmu)_1,(p_\bnu)_2 \rangle p_\blambda(p_\bnu)_1\#p_\bomega (p_\bmu)_2\right) \\
    &=\sum_{(p_\bmu),(p_\bnu)}\langle \Omega((p_\bmu)_1),\Omega((p_\bnu)_2) \rangle \Omega(p_\blambda)\Omega((p_\bnu)_1)\#\Omega(p_\bomega) \Omega((p_\bmu)_2) \\
    &= \sum_{(p_\bmu),(p_\bnu)}\langle (\Omega(p_\bmu))_1,(\Omega(p_\bnu))_2 \rangle \Omega(p_\blambda)(\Omega(p_\bnu))_1\#\Omega(p_\bomega) (\Omega(p_\bmu))_2 \\
    &=\Omega(p_\blambda\#p_\bmu)\Omega( p_\bnu\#p_\bomega),
  \end{align*}
  where we have used Lemma~\ref{lem:preserves} in the second equality and Lemma~\ref{lem:compatibility} in the third equality.
\end{proof}

The following generating functions for the complete symmetric functions, the elementary symmetric functions, and the power sums can be found in~\cite[p.~19--23]{Mac95}:
\begin{subequations}
  \begin{align}
    E(t)&=\sum_{r\geq0}e_rt^r=\prod_{i\geq1}(1+x_it),\label{eq:gene}\\
    H(t)&=\sum_{r\geq0}h_rt^r=\prod_{i\geq1}(1-x_it)^{-1},\label{eq:genh}\\
    P(t)&=\sum_{r\geq1}p_rt^{r-1}=\sum_{i\geq1}\frac{\dif}{\dif t}\log\left(\frac{1}{1-x_it}\right).\label{eq:genp}
  \end{align}
\end{subequations}

\begin{lem}\label{lem:relgen}
  We have
  \begin{equation}
    H(t)=\exp\left(\sum_{r\geq1}p_r\frac{t^r}{r}\right).
  \end{equation}
\end{lem}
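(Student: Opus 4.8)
The plan is to verify the identity by taking formal logarithms of the product expansion \eqref{eq:genh}. All manipulations should be carried out in the ring $\Sy_\Q[[t]]$ of formal power series in $t$ over $\Sy_\Q$ (note that the coefficients $1/r$ force us to work over $\Q$); since $\sum_{r\geq1}p_r t^r/r$ has zero constant term, its exponential is a well-defined element of $\Sy_\Q[[t]]$, and the infinite product $\prod_{i\geq1}(1-x_it)^{-1}$ together with the sums below converge $t$-adically, because only finitely many factors (resp.\ terms) contribute to each coefficient of a fixed power of $t$.

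First I would apply $\log$ to the factorization $H(t)=\prod_{i\geq1}(1-x_it)^{-1}$ coming from \eqref{eq:genh}, obtaining
\[
  \log H(t) = -\sum_{i\geq1}\log(1-x_it) = \sum_{r\geq1}\frac{t^r}{r}\sum_{i\geq1}x_i^r = \sum_{r\geq1}p_r\frac{t^r}{r},
\]
where the reordering of the double sum $\sum_i\sum_r (x_it)^r/r$ is legitimate degree by degree in $t$, and the last equality is the definition $p_r=\sum_i x_i^r$. Exponentiating both sides then gives the claim.

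A clean alternative, which avoids manipulating the logarithm of an infinite product directly, is to invoke \eqref{eq:genp}: that equation says precisely that $P(t)=\frac{\dif}{\dif t}\log H(t)$, i.e.\ $H'(t)=P(t)H(t)$. Setting $G(t):=\exp\bigl(\sum_{r\geq1}p_r t^r/r\bigr)$, the chain rule for formal power series gives $G'(t)=\bigl(\sum_{r\geq1}p_r t^{r-1}\bigr)G(t)=P(t)G(t)$. Thus $H$ and $G$ satisfy the same first-order linear differential equation, and both have constant term $1$ (from $h_0=1$ and $\exp(0)=1$); comparing coefficients of $t^n$ recursively shows $H=G$.

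I do not anticipate any genuine difficulty: the only point demanding care is that every step lives in the formal power series ring $\Sy_\Q[[t]]$, so that infinite sums and products, as well as $\exp$ and $\log$, are interpreted $t$-adically rather than analytically. Once this is understood the computation is short and standard.
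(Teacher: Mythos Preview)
Your proposal is correct and matches the paper's argument in substance. The paper phrases the computation as integrating \eqref{eq:genp} to obtain $\int P(t)\,\dif t=\log H(t)=\sum_{r\ge1}p_r t^r/r$ and then exponentiating, which is exactly your first approach with the intermediate step recorded via $P(t)$; your differential-equation alternative is a minor repackaging of the same identity $P(t)=\frac{\dif}{\dif t}\log H(t)$.
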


\begin{proof}
  We have, by \eqref{eq:genh} and \eqref{eq:genp},
  \begin{gather*}
    \int P(t) \dif t = \log\left(\prod_{i\geq1}\frac{1}{1-x_it}\right)=\log(H(t)),\\
    H(t) = \exp\left(\int P(t)\dif t\right)=\exp\left(\sum_{r\geq1}p_r\frac{t^r}{r}\right).\qedhere
  \end{gather*}
\end{proof}

\begin{lem}\label{lem:relgene}
  We have
  \begin{equation}
    E(t)=\exp\left(-\sum_{r\geq1}p_r\frac{(-t)^r}{r}\right).
  \end{equation}
\end{lem}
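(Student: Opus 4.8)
The plan is to mimic the proof of Lemma~\ref{lem:relgen}, replacing the generating function $H(t)$ with $E(t)$ and tracking the extra signs. First I would record the logarithmic-derivative identity underlying \eqref{eq:genp}: from \eqref{eq:gene} we have
\[
  \frac{\dif}{\dif t}\log E(t) = \sum_{i\geq1}\frac{\dif}{\dif t}\log(1+x_it) = \sum_{i\geq1}\frac{x_i}{1+x_it}.
\]
Next I would compare this with $P(t)$ by substituting $-t$ for $t$ in \eqref{eq:genp}: since $P(t)=\sum_{i\geq1}\tfrac{\dif}{\dif t}\log\bigl(\tfrac{1}{1-x_it}\bigr) = \sum_{i\geq1}\tfrac{x_i}{1-x_it}$, we get $P(-t) = \sum_{i\geq1}\tfrac{x_i}{1+x_it}$. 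Hence $\tfrac{\dif}{\dif t}\log E(t) = P(-t) = \sum_{r\geq1}p_r(-t)^{r-1}$.

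Then I would integrate. Integrating $P(-t)\,\dif t = -P(-t)\,\dif(-t)$ term by term gives
\[
  \log E(t) = -\sum_{r\geq1} p_r\frac{(-t)^r}{r},
\]
using that $E(0)=e_0=1$ so the constant of integration vanishes, and finally exponentiating yields $E(t) = \exp\!\bigl(-\sum_{r\geq1} p_r (-t)^r/r\bigr)$, as desired. Alternatively — and this might be the cleaner write-up — one can simply apply the ring involution $\Omega$ of Remark~\ref{rem:involution} to Lemma~\ref{lem:relgen}: since $\Omega(h_r)=e_r$ by Lemma~\ref{lem:change} and $\Omega(p_r)=(-1)^{r-1}p_r$, applying $\Omega$ to $H(t)=\exp(\sum_{r\geq1}p_r t^r/r)$ gives $E(t)=\exp(\sum_{r\geq1}(-1)^{r-1}p_r t^r/r)=\exp(-\sum_{r\geq1}p_r(-t)^r/r)$, provided one checks $\Omega$ commutes with the formal power series operations (it is a ring homomorphism applied coefficientwise in $t$, so this is immediate).

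I do not anticipate a genuine obstacle here; the only thing to be careful about is the bookkeeping of signs — specifically that $(-1)^{r-1}t^r = -(-t)^r$ — and the observation that the constant term causes no trouble because $e_0 = 1$ forces $\log E(t)$ to have zero constant term. I would present the direct integration argument as the main proof, parallel in structure to Lemma~\ref{lem:relgen}, and perhaps remark that it also follows from applying $\Omega$ to Lemma~\ref{lem:relgen}.
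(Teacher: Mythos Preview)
Your proposal is correct, but the paper takes a different (and shorter) route: it simply invokes the reciprocal identity $E(-t)H(t)=1$ (immediate from the product formulas \eqref{eq:gene} and \eqref{eq:genh}) together with Lemma~\ref{lem:relgen}. From $E(-t)=H(t)^{-1}=\exp\bigl(-\sum_{r\ge1}p_rt^r/r\bigr)$ one substitutes $t\mapsto -t$ and is done. Your direct-integration argument redoes the computation from scratch in parallel with Lemma~\ref{lem:relgen}; it is perfectly valid and self-contained, but it does not exploit the fact that $E$ and $H$ are already related by a simple algebraic identity, so it is longer than necessary. Your alternative via the involution $\Omega$ is closer in spirit to the paper's proof---both deduce the result from Lemma~\ref{lem:relgen} rather than reproving it---and has the virtue of reusing Lemmas~\ref{lem:change} and the definition of $\Omega$; it would make a fine one-line proof as well. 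Either of your approaches is acceptable, but the $E(-t)H(t)=1$ trick is the most economical.
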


\begin{proof}
  The proof follows from the fact that $E(-t)H(t)=1$, and Lemma~\ref{lem:relgen}.
\end{proof}

\section{Symmetric and exterior algebras}
\label{sec:3}
To properly describe our approach to the presentation of quantum lattice Heisenberg algebras, we first introduce some ingredients used in the construction. For the remainder of the paper, we let $q$ denote an indeterminate.

\begin{defin}[Graded dimension]
Let $V=\bigoplus_{n\in\Z}V_n$ be a $\Z$-graded vector space over $\kk$. The \emph{graded dimension} of $V$ is
\[\qdim(V)=\sum_{n\in\Z}\dim(V_n)q^n\in\N[q,q^{-1}].\]
\end{defin}

\begin{defin}[Symmetric algebra]
Let $W$ be a vector space over $\kk$. The \emph{symmetric algebra} of $W$ is the quotient algebra of the tensor algebra of $W$ by the ideal $\mathcal{I}$ generated by the elements $a\otimes b-b\otimes a$, with $a,b\in W\subseteq\mathcal{T}(W)$. This is
\[S(W)=\frac{\bigoplus_{k\geq 0}\mathcal{T}^k(W)}{\langle a\otimes b-b\otimes a\rangle}.\]
The \emph{$k$-th symmetric power} of $W$, $S^k(W)$, is the image of $\mathcal{T}^k(W)$ under the quotient map $\mathcal{T}(W)\longrightarrow \mathcal{T}(W)/\mathcal{I}$.
\details{We denote the image of $a \otimes b$ in $S(W)$ by $a \circ b$. Thus, $a\circ b=b\circ a$ for all $a,b\in W$.}
\end{defin}

The generating function for the graded dimension of the symmetric powers of a $\Z$-graded vector space $V$ is
\begin{equation}\label{eq:gen-f-gr}
\sum_{k\in\N}t^k\qdim \left(S^k(V)\right)=\prod_{n\in\Z}\left(\frac{1}{1-q^nt}\right)^{\dim(V_n)}.
\end{equation}

\begin{defin}[Exterior algebra]
Let $W$ be a vector space over $\kk$. The \emph{exterior algebra} or \emph{antisymmetric algebra} of $W$ is the quotient algebra of the tensor algebra of $W$ by the ideal $\mathcal{I}$ generated by the elements $a\otimes a$, with $a\in W\subseteq\mathcal{T}(W)$. This is
\[\Lambda(W)=\frac{\bigoplus_{k\geq 0}\mathcal{T}^k(W)}{\langle a\otimes a\rangle}.\]
The \emph{$k$-th exterior power} of $W$, $\Lambda^k(W)$, is the image of $\mathcal{T}^k(W)$ under the quotient map $\mathcal{T}(W)\longrightarrow \mathcal{T}(W)/\mathcal{I}$. \details{We denote the image of $a \otimes b$ in $\Lambda(W)$ by $a \wedge b$. Thus, $a\wedge b=-b\wedge a$ for all $a,b\in W$.}
\end{defin}

The generating function for the graded dimension of the exterior powers of a $\Z$-graded vector space $V$ is
\begin{equation}\label{eq:gen-f-wedge}
\sum_{k\in\N}t^k\qdim \left(\Lambda^k(V)\right)=\prod_{n\in\Z}\left(1+q^nt\right)^{\dim(V_n)}.
\end{equation}

Since our main theorem will involve graded dimensions of symmetric and exterior powers, it is useful to have explicit expressions for these.

Consider the set of functions from $\Z$ to $\N$, $\N^\Z$. Let bold Latin characters denote functions $\bm\in\N^\Z$ with $\bm(n)=0$ for all but finitely many $n\in\Z$. We then set, for all such $\bm\in\N^\Z$, $|\bm|=\sum_{n\in\Z}\bm(n)$ and $\|\bm\|=\sum_{n\in\Z}n\bm(n)$, as well as
  \[
    s^\bm V = \prod_{n\in\Z}\binom{\dim(V_n)+\bm(n)-1}{\bm(n)},\qquad
    \ell^\bm V = \prod_{n\in\Z}\binom{\dim(V_n)}{\bm(n)},
  \]
  with $V$ a $\Z$-graded vector space of finite dimension.

\begin{prop}\label{prop:graded}
  Suppose $V$ is a finite-dimensional $\Z$-graded vector space.  Then
  \begin{equation} \label{eq:gr-sim-ext-explicit}
    \qdim(S^k(V))=\sum_{|\bm|=k}q^{\|\bm\|}s^\bm V
    \quad \text{and} \quad
    \qdim(\Lambda^k(V))=\sum_{|\bm|=k}q^{\|\bm\|}\ell^\bm V.
  \end{equation}
\end{prop}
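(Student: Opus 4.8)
The plan is to extract the explicit formulas from the generating-function identities \eqref{eq:gen-f-gr} and \eqref{eq:gen-f-wedge} by expanding the products over $n \in \Z$ and collecting the coefficient of $t^k$. Since $V$ is finite-dimensional, only finitely many factors in each product are nontrivial, so there are no convergence issues and everything happens in $\N[q,q^{-1}][[t]]$.

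First I would treat the symmetric case. Starting from
\[
  \sum_{k\in\N}t^k\qdim \left(S^k(V)\right)=\prod_{n\in\Z}\left(\frac{1}{1-q^nt}\right)^{\dim(V_n)},
\]
I would expand each factor using the negative binomial series
\[
  (1-q^n t)^{-\dim(V_n)} = \sum_{a \geq 0} \binom{\dim(V_n)+a-1}{a} q^{na} t^a.
\]
Multiplying these series over all $n$ (a finite product), a term in the expansion is indexed by a choice of exponent $\bm(n) = a$ for each $n$, i.e.\ by a function $\bm \in \N^\Z$ with finite support; such a term contributes $\prod_{n}\binom{\dim(V_n)+\bm(n)-1}{\bm(n)} q^{n\bm(n)} t^{\bm(n)} = s^\bm V \cdot q^{\|\bm\|} t^{|\bm|}$. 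Collecting the coefficient of $t^k$ means summing over all $\bm$ with $|\bm|=k$, which gives exactly $\qdim(S^k(V)) = \sum_{|\bm|=k} q^{\|\bm\|} s^\bm V$.

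The exterior case is entirely parallel: starting from \eqref{eq:gen-f-wedge} and expanding each factor by the finite binomial theorem
\[
  (1+q^n t)^{\dim(V_n)} = \sum_{a=0}^{\dim(V_n)} \binom{\dim(V_n)}{a} q^{na} t^a,
\]
the same bookkeeping replaces $s^\bm V$ by $\ell^\bm V = \prod_n \binom{\dim(V_n)}{\bm(n)}$, with the convention $\binom{d}{a}=0$ for $a>d$ automatically restricting the sum to valid $\bm$; this yields $\qdim(\Lambda^k(V)) = \sum_{|\bm|=k} q^{\|\bm\|} \ell^\bm V$.

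There is no real obstacle here — the only point requiring a word of care is the interchange of the (finite) product and the sums, which is justified because $\dim(V_n)=0$ for all but finitely many $n$, so the product has finitely many nontrivial factors and the manipulation is a finite one at each fixed power of $t$. One could alternatively phrase the proof as an induction on the number of nonzero graded pieces of $V$, using the Cauchy product to combine one factor at a time, but the direct expansion is cleaner and I would present that.
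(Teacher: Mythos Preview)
Your proposal is correct and is essentially the same argument as the paper's: both expand each factor of the generating-function identities \eqref{eq:gen-f-gr} and \eqref{eq:gen-f-wedge} via the (negative) binomial series, reindex the resulting product of sums by a finitely supported $\bm\in\N^\Z$, and read off the coefficient of $t^k$. Your extra remark that finite-dimensionality of $V$ makes the product finite (so no convergence issues arise) is a reasonable clarification but not a departure from the paper's method.
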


\begin{proof}
  The Taylor series of $(1-q^n t)^{-\beta}$ about $t=0$, with $n\in\Z$, $\beta\in\N$, is
  \begin{equation}\label{eq:taylor-ab}
    (1-q^n t)^{-\beta}=\sum_{m\geq 0}t^mq^{nm}\binom{\beta+m-1}{m}.
  \end{equation}
  Substitution of \eqref{eq:taylor-ab} into \eqref{eq:gen-f-gr} yields
  \[
    \sum_{k\in\N}t^k\qdim( S^k(V))
    = \prod_{n \in \Z} \sum_{m_n \in \N} t^{m_n} q^{n m_n} \binom{\dim(V_n)+m_n-1}{m_n} \\
    = \sum_{\bm\in\N^\Z} t^{|\bm|} q^{\|\bm\|} s^\bm V.
  \]
  Comparing coefficients of powers of $t$ gives the first equation in \eqref{eq:gr-sim-ext-explicit}.  The proof of the second is analogous.
  \details{
    The Taylor series of $(1+q^n t)^{\beta}$ about $t=0$, with $n\in\Z$, $\beta\in\N$, is
    \begin{equation}\label{eq:taylor-abc}
      (1+q^n t)^{\beta}=\sum_{m\geq 0}t^nq^{nm}\binom{\beta}{m}.
    \end{equation}
    Substitution of \eqref{eq:taylor-abc} into \eqref{eq:gen-f-wedge} yields
    \[
      \sum_{k\in\N}t^k\qdim(\Lambda^k(V))
      = \prod_{n\in\Z}\sum_{m_n\in\N}t^{m_n}q^{n{m_n}}\binom{\dim(V_n)}{m_n}
      = \sum_{\bm\in\N^\Z}t^{|\bm|}q^{\|\bm\|}\ell^\bm V.
    \]
   Comparing coefficients of powers of $t$ gives the second equation in \eqref{eq:gr-sim-ext-explicit}.
  }
\end{proof}

\section{Presentations of quantum lattice Heisenberg algebras}
\label{sec:4}
In this section we present our main result on presentations of quantum lattice Heisenberg algebras.  For $n \in \N$, we let $[n]$ denote the quantum integer
\[
  [n]=\frac{q^{-n}-q^n}{q^{-1}-q}=q^{-n+1}+q^{-n+3}+\dotsb+q^{n-3}+q^{n-1}\in \N[q,q^{-1}].
\]
Note that if we set $q=1$ in the last expression above, we obtain $[n]=n$.  We define $[-n]=[n]$ for $n\in \N_{>0}$.

Let us consider the pairing
\begin{equation}\label{eq:general-pairing}
  \langle p_{n,i}, p_{m,j}\rangle=\delta_{nm}[n\langle i,j\rangle]\frac{n}{[n]},
\end{equation}
which, by Proposition~\ref{prop:extension}, extends to a unique Hopf pairing $\langle -, - \rangle \colon H^+ \times  H^- \to \Z[q,q^{-1}]$.

\begin{defin}[Quantum lattice Heisenberg algebra]\label{def:latticeHA}
  Let $L$ be the free $\Z$-module on the the set $\{v_i\}_{i\in I}$.  Let $\langle -,-\rangle_L\colon L\times L\longrightarrow \Z$ be a symmetric bilinear form. The \emph{quantum lattice Heisenberg algebra} $\fh^L_q$ associated to $L$ is the unital $\Z$-algebra generated by $q_{i,n}$, $i\in I$, $n\in \Z\setminus \{0\}$, and commutation relations
\[q_{m,i}q_{n,j}=q_{n,j}q_{m,i}+\delta_{m,-n}[n\langle v_i,v_j\rangle_L]\frac{n}{[n]},\quad i,j\in I,\quad n,m\in\Z\setminus\{0\}.\]
\end{defin}

\begin{lem}\label{lem:qha}
  With the pairing defined in \eqref{eq:general-pairing}, the Heisenberg double $\fh(H^+,H^-)$ is isomorphic to the quantum lattice Heisenberg algebra via the map
  \[
    p_{m,i}^- \mapsto q_{-m,i},\quad p_{m,i}^+ \mapsto q_{m,i},\quad m \in \N_{>0},\ i \in I.
  \]
  In particular,
  \begin{equation} \label{eq:pairing}
    p^-_{m,i}p^+_{n,j}=p^+_{n,j}p^-_{m,i}+\delta_{nm}[n\langle i,j\rangle]\frac{n}{[n]},\quad n,m \in \N_{> 0},\ i,j \in I.
  \end{equation}
\end{lem}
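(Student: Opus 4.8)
The plan is to produce an explicit algebra isomorphism $\varphi \colon \fh(H^+,H^-) \to \fh^L_q$ (with $\langle v_i,v_j\rangle_L = \langle i,j\rangle$) by sending the generators $p^+_{m,i} \mapsto q_{m,i}$ and $p^-_{m,i}\mapsto q_{-m,i}$ for $m \in \N_{>0}$, $i \in I$. Since $H^+ \cong H^- \cong \Sy_\Q^{\otimes I}$ is a free commutative algebra on the $p_{m,i}$, and since $\fh(H^+,H^-) = H^+ \otimes H^-$ as a vector space with $H^\pm$ embedded as subalgebras, it suffices to check that the images of the generators satisfy the defining relations of $\fh^L_q$, plus that the resulting map is bijective. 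The commutativity relations among the $p^+_{m,i}$ (and among the $p^-_{m,i}$) are immediate because $H^+$ and $H^-$ are commutative. So the crux is establishing the mixed relation \eqref{eq:pairing}, which simultaneously is the one nontrivial defining relation of $\fh^L_q$ and is what we must verify; this is the main computation.

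First I would compute $p^-_{m,i}p^+_{n,j}$ directly from the Heisenberg double multiplication \eqref{eq:double-mult}. Writing $p^-_{m,i} = 1\# p_{m,i}$ and $p^+_{n,j} = p_{n,j}\#1$, formula \eqref{eq:double-mult} gives
\[
  (1\# p_{m,i})(p_{n,j}\# 1) = \sum_{(p_{m,i}),(p_{n,j})} \langle (p_{m,i})_1, (p_{n,j})_2\rangle\, (p_{n,j})_1 \# (p_{m,i})_2.
\]
Now I use the primitivity of power sums, \eqref{eq:power-cop}: $\Delta(p_{m,i}) = 1\otimes p_{m,i} + p_{m,i}\otimes 1$ and likewise for $p_{n,j}$. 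Expanding the four terms of the double sum: the term with $(p_{m,i})_1 = p_{m,i}$, $(p_{n,j})_2 = p_{n,j}$ contributes $\langle p_{m,i}, p_{n,j}\rangle\, 1\#1$; the term with $(p_{m,i})_1 = 1$, $(p_{n,j})_2 = 1$ contributes $\langle 1,1\rangle\, p_{n,j}\# p_{m,i} = p^+_{n,j}p^-_{m,i}$; the term with $(p_{m,i})_1 = p_{m,i}$, $(p_{n,j})_2 = 1$ contributes $\langle p_{m,i}, 1\rangle\, p_{n,j}\#1 = 0$ since $\langle x, 1_{H^-}\rangle = \eta(x)$ vanishes on the positively-graded element $p_{m,i}$; and symmetrically the fourth term vanishes since $\langle 1_{H^+}, p_{n,j}\rangle = \eta(p_{n,j}) = 0$. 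Hence $p^-_{m,i}p^+_{n,j} = p^+_{n,j}p^-_{m,i} + \langle p_{m,i}, p_{n,j}\rangle$, and substituting the defining value \eqref{eq:general-pairing}, namely $\langle p_{m,i}, p_{n,j}\rangle = \delta_{nm}[n\langle i,j\rangle]\tfrac{n}{[n]}$, yields exactly \eqref{eq:pairing}.

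It remains to see that $\varphi$ is a well-defined algebra isomorphism. Well-definedness: the $q_{m,i}$ generate $\fh^L_q$ subject only to commutativity within positive (resp.\ negative) indices and the mixed relation, all of which we have verified hold among the images, so $\varphi$ extends to an algebra homomorphism. Surjectivity is clear since the $q_{m,i}$ are in the image. For injectivity, one observes that the defining relations of $\fh^L_q$ let any element be rewritten as a $\Z$-linear combination of ordered monomials $q_{\blambda^+}\,q_{\blambda^-}$ (positive generators, in some fixed order, times negative generators), so $\dim$-counting shows these monomials span; on the other side $\{p_\blambda \# p_\bmu\}$ is a genuine basis of $\fh(H^+,H^-)$, and $\varphi$ sends (an appropriate reordering of) the spanning monomials onto this basis, forcing injectivity — equivalently, the PBW-type straightening in $\fh^L_q$ cannot collapse because the Heisenberg double provides a faithful model. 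I expect the only real subtlety to be this last point (that the commutation relations impose no further collapse in $\fh^L_q$); it is handled exactly as in the classical Heisenberg case and is the reason the Heisenberg-double viewpoint is advantageous. The "In particular" equation \eqref{eq:pairing} then follows from the displayed computation above, independently of the isomorphism statement.
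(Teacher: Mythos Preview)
Your proof is correct, and the core computation---expanding $(1\#p_{m,i})(p_{n,j}\#1)$ via the Heisenberg-double product \eqref{eq:double-mult} using primitivity of the power sums---is exactly the paper's approach. You in fact go further than the paper by supplying the well-definedness and PBW/bijectivity argument for the isomorphism itself, which the paper's proof leaves implicit after establishing \eqref{eq:pairing}.
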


\begin{proof}
  We have, recalling \eqref{eq:double-mult} and \eqref{eq:power-cop},
  \begin{align*}
    p_{m,i}^-p^+_{n,j} &\ =(1\#p_{m,i})(p_{n,j}\#1),\\
    &\ = \sum_{\stackrel{(p_{m,i})}{(p_{n,j})}}\langle (p_{m,i})_1,(p_{n,j})_2\rangle (p_{n,j})_1\#(p_{m,i})_2,\\
    &\ =\langle p_{m,i},1\rangle p_{n,j}\#1+\langle 1,p_{n,j}\rangle 1\#p_{m,i}+\langle 1,1\rangle p_{n,j}\#p_{m,i}+\langle p_{m,i},p_{n,j}\rangle 1\#1,\\
    &\stackrel{\eqref{eq:general-pairing}}{=}p^+_{n,j}p^-_{m,i}+p_{n,j}\#\delta_{m0}+\delta_{0n}\#p_{m,i}+\delta_{nm}[n\langle i,j\rangle]\frac{n}{[n]},\\
    &\ =p^+_{n,j}p^-_{m,i}+\delta_{nm}[n\langle i,j\rangle]\frac{n}{[n]}.\qedhere
  \end{align*}
\end{proof}

The following two theorems give presentations of the quantum lattice Heisenberg algebra in terms of the complete and elementary symmetric functions.

\begin{theo}\label{theo:product-h-q}
  The Heisenberg double $\fh(H^+,H^-)$ is generated by the complete symmetric functions $\{h_n^+,h_n^-\}_{n\in\N}$, with relations
  \begin{align*}
    h^-_{n,j}h^+_{m,i}=\sum_{r=0}^{\min(m,n)}\qdim(S^r(V))h^+_{m-r,i}h^-_{n-r,j}\quad\textrm{if }\langle i,j\rangle\in\Z_{\geq0},\\
    h^-_{n,j}h^+_{m,i}=\sum_{r=0}^{\min(m,n)}\qdim(\Lambda^r(V))h^+_{m-r,i}h^-_{n-r,j}\quad\textrm{if }\langle i,j\rangle\in\Z_{<0},
  \end{align*}
  where $V$is a $\Z$-graded vector space such that $\qdim(V)=[\langle i,j\rangle]$.
\end{theo}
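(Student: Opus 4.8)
The plan is to work with generating functions and reduce everything to the power-sum relation \eqref{eq:pairing}, which already encodes all the structure of the Heisenberg double. First I would introduce the generating series $H^\pm_i(t) = \sum_{m\ge 0} h^\pm_{m,i}t^m$ and use Lemma~\ref{lem:relgen} to write $H^\pm_i(t) = \exp\!\big(\sum_{r\ge1} p^\pm_{r,i}\tfrac{t^r}{r}\big)$ inside the respective subalgebras $H^\pm$ of $\fh(H^+,H^-)$. The key computational step is to commute $H^-_j(s)$ past $H^+_i(t)$: using \eqref{eq:pairing}, the operators $p^-_{r,j}$ and $p^+_{r,i}$ commute up to the scalar $\delta_{\text{matching}}[r\langle i,j\rangle]\tfrac{r}{[r]}$, which is central, so a standard Baker–Campbell–Hausdorff / normal-ordering argument gives
\[
  H^-_j(s)\,H^+_i(t) = \exp\!\Big(\sum_{r\ge1} [r\langle i,j\rangle]\frac{r}{[r]}\cdot\frac{(st)^r}{r^2}\Big)\; H^+_i(t)\,H^-_j(s)
  = \exp\!\Big(\sum_{r\ge1} \frac{[r\langle i,j\rangle]}{r[r]}(st)^r\Big)\; H^+_i(t)\,H^-_j(s).
\]
Here the only thing being used is that $[\exp X, \exp Y] $ collapses when $[X,Y]$ is central, together with $\sum_r \frac{p_r}{r} t^r$ having ``coefficient pairing'' $\langle \tfrac{p_r}{r}s^r, \tfrac{p_r}{r}t^r\rangle$-type contributions.

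Next I would identify the scalar prefactor with the symmetric/exterior generating function of Proposition~\ref{prop:graded}. Let $V$ be a $\Z$-graded vector space with $\qdim(V) = [\langle i,j\rangle] = \sum_{n} \dim(V_n) q^n$; such a $V$ exists precisely because $[\langle i,j\rangle]\in\N[q,q^{-1}]$ when $\langle i,j\rangle\ge 0$. The point is the elementary identity
\[
  \sum_{r\ge1}\frac{[r c]}{r[r]}\,u^r
  = \sum_{r\ge1}\frac{1}{r}\Big(\sum_{n}\dim(V_n)q^{nr}\Big)u^r
  = -\sum_{n}\dim(V_n)\log(1-q^n u)
  = \log\prod_{n}(1-q^n u)^{-\dim(V_n)},
\]
where the first equality uses $\frac{[rc]}{[r]} = \sum_{\text{terms}} q^{\bullet}$ being exactly the specialization of $\qdim(V)$ at $q\mapsto q^r$ — this needs the clean fact $[rc]/[r] = h_{?}$-type sum, i.e.\ that evaluating $\qdim(V)$ with $q^r$ in place of $q$ gives $\sum_n \dim(V_n) q^{nr}$. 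Exponentiating and invoking \eqref{eq:gen-f-gr} shows the prefactor equals $\sum_{r\ge0}(st)^r\qdim(S^r(V))$. Thus
\[
  H^-_j(s)H^+_i(t) = \Big(\sum_{r\ge 0}(st)^r \qdim(S^r(V))\Big) H^+_i(t) H^-_j(s).
\]
For $\langle i,j\rangle<0$ I would instead take $V$ with $\qdim(V)=[\langle i,j\rangle]=[-\langle i,j\rangle]$ (using $[-n]=[n]$) but absorb a sign: since $[r\langle i,j\rangle]/[r]$ is unchanged but one wants $\prod_n(1+q^nu)^{\dim V_n}$, I would track that $\log\prod_n(1+q^nu)^{\dim V_n} = \sum_r \frac{(-1)^{r-1}}{r}(\sum_n \dim V_n q^{nr})u^r$ and reconcile this with Lemma~\ref{lem:relgene} / the $\Omega$-twist — or, more cleanly, apply the involution $\Omega$ (Proposition~\ref{prop:endomorphism}, Lemma~\ref{lem:change}) which swaps $h_n\leftrightarrow e_n$ and, by Lemma~\ref{lem:preserves}, preserves the pairing, turning the $S^r$ relation into the $\Lambda^r$ relation.

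Finally, extracting the coefficient of $s^n t^m$ on both sides of the generating-function identity yields exactly
\[
  h^-_{n,j}h^+_{m,i} = \sum_{r=0}^{\min(m,n)} \qdim(S^r(V))\, h^+_{m-r,i} h^-_{n-r,j},
\]
and likewise with $\Lambda^r$ in the negative case; the sum truncates at $\min(m,n)$ because $h^\pm_{\ell}=0$ for $\ell<0$. That the $h^\pm_{n,i}$ generate $\fh(H^+,H^-)$ is immediate since they generate $\Sy^{\otimes I}$ over $\Z$ (hence over $\Q$ after base change) inside each of $H^\pm$, and $\fh(H^+,H^-)=H^+\otimes H^-$ as a module. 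I expect the main obstacle to be the sign bookkeeping in the $\langle i,j\rangle<0$ case: one must be careful that $[r\langle i,j\rangle]$ is genuinely a nonnegative Laurent polynomial in $q$ to even speak of a graded vector space $V$, and that the alternating signs in the logarithm of $\prod(1+q^n u)$ match up — the cleanest route really is to deduce the second relation from the first by applying $\Omega$ rather than redoing the computation, and I would present it that way. A secondary technical point worth stating carefully is the normal-ordering lemma for exponentials of power sums when the ``commutator'' is a scalar; this is standard but should be justified, e.g.\ by expanding and matching coefficients, or by the observation that in $\fh(H^+,H^-)$ the element $\sum_r \frac{p^-_{r,j}}{r}s^r$ acts on $H^+$ as a derivation-like operator shifted by the central pairing term.
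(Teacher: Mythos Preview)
Your overall strategy is exactly the paper's: introduce the exponential generating series $\exp\!\big(\sum_{k\ge1}p^\pm_{k}\,t^k/k\big)$, compute the (central) commutator via \eqref{eq:pairing}, apply the BCH identity $\exp(B)\exp(A)=\exp([B,A])\exp(A)\exp(B)$, identify $\exp([B,A])$ with the product formulas \eqref{eq:gen-f-gr}/\eqref{eq:gen-f-wedge}, and compare coefficients. Your identification $\tfrac{[rc]}{[r]}=\sum_n\dim(V_n)\,q^{nr}$ (i.e.\ ``evaluate $\qdim(V)$ at $q\mapsto q^r$'') is a clean way to phrase what the paper does by writing out the geometric sum $\sum_{g=0}^{\chi-1}q^{k(\chi-2g-1)}$.

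The one genuine gap is your proposed shortcut for the case $\langle i,j\rangle<0$ via $\Omega$. The involution $\Omega$ is an algebra endomorphism of $\fh(H^+,H^-)$ that sends $h_n^\pm\mapsto e_n^\pm$ and \emph{fixes all scalar coefficients}; by Lemma~\ref{lem:preserves} it also preserves the pairing, so it does \emph{not} change the sign of $\langle i,j\rangle$. Applying $\Omega$ to the relation $h^-_{n,j}h^+_{m,i}=\sum_r\qdim(S^r(V))\,h^+_{m-r,i}h^-_{n-r,j}$ (valid for $\langle i,j\rangle\ge0$) therefore yields $e^-_{n,j}e^+_{m,i}=\sum_r\qdim(S^r(V))\,e^+_{m-r,i}e^-_{n-r,j}$, still with $S^r$ and still for $\langle i,j\rangle\ge0$; it does not produce the $\Lambda^r$ relation for $h$'s when $\langle i,j\rangle<0$. (This is exactly how the paper uses $\Omega$ in the Remark following Theorem~\ref{theo:product-e-h-q}: to pass between generating sets, not between sign regimes.) So the negative case really must be handled by the direct sign-tracking you mention as the alternative---and that is precisely what the paper does, rewriting $\tfrac{[k\chi]}{[k]}$ for $\chi<0$ so that the exponential becomes $\prod_g(1+q^{|\chi|-(2g+1)}\omega\zeta)$ and \eqref{eq:gen-f-wedge} applies. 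Carry that computation out explicitly and your proof will match the paper's.
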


\begin{proof}
  For notational convenience, let $\chi:=\langle i,j\rangle$. First set
  \begin{align*}
    A(\zeta):=\sum_{k\geq1}p^+_{k,i}\frac{\zeta^k}{k},\qquad
    B(\omega):=\sum_{k\geq1}p^-_{k,j}\frac{\omega^k}{k}.
  \end{align*}
  Now, by Lemma \ref{lem:relgen}, we have
  \[
    \sum_{n,m\geq 0}h^-_{n,j}h^+_{m,i}\omega^n\zeta^m=\exp(B)\exp(A).
  \]
  Let us calculate
  \begin{align*}
    [B,A]&=\left(\sum_{k\geq1}p^-_{k,j} \frac{\omega^k}{k}\right) \left(\sum_{r\geq1}p^+_{r,i}\frac{\zeta^r}{r}\right) - \left(\sum_{r\geq1}p^+_{r,i}\frac{\zeta^r}{r}\right) \left(\sum_{k\geq1}p^-_{k,j}\frac{\omega^k}{k}\right) \\
    &=\sum_{k,r\geq1}(p^-_{k,j}p^+_{r,i}-p^+_{r,i}p^-_{k,j})\frac{\omega^k\zeta^r}{kr} \\
    &\stackrel{\eqref{eq:pairing}}{=} \sum_{k,r\geq1}\delta_{kr}[k\chi] \frac{k}{[k]r}\omega^k\zeta^r=\sum_{k\geq1}\frac{[k\chi]}{[k]k}(\omega\zeta)^k \\
    &=
    \begin{cases}
      \sum_{k\geq 1}\frac{(\omega\zeta)^k}{k}\frac{q^{-k\chi}-q^{k\chi}}{q^{-k}-q^k} & \textrm{if }\chi\in\Z_{\geq0}, \\
      \sum_{k\geq 1}\frac{(\omega\zeta)^k(-1)^{-k\chi+1}}{k} \frac{(-q)^{-k(-\chi)}-(-q)^{k(-\chi)}}{(-q)^{-k}-(-q)^k} & \textrm{if }\chi\in\Z_{<0}.\\
    \end{cases}
  \end{align*}
  For any two operators $A,B$, it is routine to prove (see, for example, \cite[Lem.\ 9.43]{Nak99}) that as long as $[A,B]$ commutes with both $A$ and $B$, then
  \begin{equation}\label{eq:exponent-rule}
    \exp(B)\exp(A)=\exp([B,A])\exp(A)\exp(B).
  \end{equation}
  We have
  \begin{align*}
    [B,A]&=
    \begin{cases}
      \sum_{k\geq 1}\sum_{g=0}^{\chi-1}\frac{1}{k}(q^{\chi-(2g+1)}\omega\zeta)^k & \textrm{ if }\chi\in\Z_{\geq0},\\
      -\sum_{k\geq 1}\sum_{g=0}^{(-\chi)-1}\frac{1}{k}(-q^{(-\chi)-(2g+1)}\omega\zeta)^k & \textrm{ if }\chi\in\Z_{<0},\\
    \end{cases}\\
    &=\begin{cases}
      \log\left(\prod_{g=0}^{\chi-1}(1-q^{\chi-(2g+1)}\omega\zeta)^{-1}\right) & \textrm{ if }\chi\in\Z_{\geq0},\\
      \log\left(\prod_{g=0}^{-\chi-1}(1+q^{(-\chi)-(2g+1)}\omega\zeta)\right) & \textrm{ if }\chi\in\Z_{<0}.\\
    \end{cases}
  \end{align*}
  By \eqref{eq:exponent-rule}, we have
  \begin{align*}
    \sum_{n,m\geq 0}h^-_{n,j}h^+_{m,i}\omega^n\zeta^m&=
    \begin{cases}
      \sum_{s,t\geq 0}h^+_{t,i}h^-_{s,j}\omega^s\zeta^t\prod_{g=0}^{\chi-1}(1-q^{\chi-(2g+1)}\omega\zeta)^{-1} & \textrm{ if }\chi\in\Z_{\geq0},\\
      \sum_{s,t\geq 0}h^+_{t,i}h^-_{s,j}\omega^s\zeta^t\prod_{g=0}^{-\chi-1}(1+q^{(-\chi)-(2g+1)}\omega\zeta) & \textrm{ if }\chi\in\Z_{<0}.\\
    \end{cases}
  \end{align*}
  Now, since $\qdim(V)=[\chi]=\sum_{g=0}^{\chi-1}q^{\chi-(2g+1)}$, we recall \eqref{eq:gen-f-gr} and \eqref{eq:gen-f-wedge} and thus write
  \[
    \sum_{n,m\geq 0}h^-_{n,j}h^+_{m,i}\omega^n\zeta^m=
    \begin{cases}
      \sum_{s,t\geq 0}h^+_{t,i}h^-_{s,j}\omega^s\zeta^t\sum_{k\geq 0}(\omega\zeta)^k\qdim \left(S^k(V)\right) & \textrm{ if }\chi\in\Z_{\geq0},\\
      \sum_{s,t\geq 0}h^+_{t,i}h^-_{s,j}\omega^s\zeta^t\sum_{k\geq 0}(\omega\zeta)^k\qdim \left(\Lambda^k(V)\right) & \textrm{ if }\chi\in\Z_{<0}.\\
    \end{cases}
  \]
  Upon comparison of the coefficients of $\omega\zeta$, we finally arrive at
  \[
    h^-_{n,j}h^+_{m,i}=\begin{cases}
    \sum_{r=0}^{\min(m,n)}h^+_{m-r,i}h^-_{n-r,j}\qdim(S^r(V)) & \textrm{ if }\chi\in\Z_{\geq0}, \\
    \sum_{r=0}^{\min(m,n)}h^+_{m-r,i}h^-_{n-r,j}\qdim(\Lambda^r(V)) & \textrm{ if }\chi\in\Z_{<0}.
    \end{cases}\qedhere
  \]
\end{proof}

\begin{theo}\label{theo:product-e-h-q}
  The Heisenberg double $\fh(H^+,H^-)$ is generated by the complete and elementary symmetric functions $\{h_n^+,e_n^-\}_{n\in\N}$, with relations
  \begin{align*}
    e^-_{n,j}h^+_{m,i}=\sum_{r=0}^{\min(m,n)}\qdim(\Lambda^r(V))h^+_{m-r,i}e^-_{n-r,j}\quad\textrm{if }\langle i,j\rangle\in\Z_{\geq0},\\
    e^-_{n,j}h^+_{m,i}=\sum_{r=0}^{\min(m,n)}\qdim(S^r(V))h^+_{m-r,i}e^-_{n-r,j}\quad\textrm{if }\langle i,j\rangle\in\Z_{<0},
  \end{align*}
  where $V$is a $\Z$-graded vector space such that $\qdim(V)=[\langle i,j\rangle]$.
\end{theo}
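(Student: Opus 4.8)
The plan is to mimic the proof of Theorem~\ref{theo:product-h-q} but with the $e^-$ generating function $E(\omega)$ in place of $H(\omega)$, using Lemma~\ref{lem:relgene} to write $E(\omega)=\exp\!\big(-\sum_{k\geq1}p^-_{k,j}(-\omega)^k/k\big)$. So I set $\chi:=\langle i,j\rangle$, keep $A(\zeta)=\sum_{k\geq1}p^+_{k,i}\zeta^k/k$ as before, but now put $B'(\omega):=-\sum_{k\geq1}p^-_{k,j}(-\omega)^k/k$, so that by Lemmas~\ref{lem:relgen} and~\ref{lem:relgene} we have $\sum_{n,m\geq0}e^-_{n,j}h^+_{m,i}\omega^n\zeta^m=\exp(B')\exp(A)$. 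Then I recompute the commutator: since $B'(\omega)=B(-\omega)$ in the notation of the previous proof, and $[B,A]$ depends on $\omega$ only through $\omega\zeta$, we get $[B',A]=[B,A]\big|_{\omega\zeta\mapsto -\omega\zeta}$. Concretely, using \eqref{eq:pairing},
\[
  [B',A]=-\sum_{k\geq1}(-1)^k\,\frac{[k\chi]}{[k]k}(\omega\zeta)^k,
\]
which is still a central element (it lies in the base ring), so \eqref{eq:exponent-rule} applies: $\exp(B')\exp(A)=\exp([B',A])\exp(A)\exp(B')$.

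The key step is then to identify $\exp([B',A])$ as the appropriate graded-dimension generating series with the roles of $S$ and $\Lambda$ swapped. Writing $[k\chi]/[k]=\sum_{g=0}^{\chi-1}q^{\chi-(2g+1)}$ for $\chi\geq0$ (and the analogous $-\chi$ expression, with $q\mapsto -q$ bookkeeping, for $\chi<0$), the extra sign $(-1)^{k+1}$ in $[B',A]$ turns, for $\chi\in\Z_{\geq0}$,
\[
  [B',A]=\sum_{k\geq1}\sum_{g=0}^{\chi-1}\frac{1}{k}\big(-q^{\chi-(2g+1)}\omega\zeta\big)^k
  =\log\Big(\prod_{g=0}^{\chi-1}\big(1+q^{\chi-(2g+1)}\omega\zeta\big)\Big),
\]
so that $\exp([B',A])=\prod_{g=0}^{\chi-1}(1+q^{\chi-(2g+1)}\omega\zeta)$, which by \eqref{eq:gen-f-wedge} equals $\sum_{k\geq0}(\omega\zeta)^k\qdim(\Lambda^k(V))$ with $\qdim(V)=[\chi]$. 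Symmetrically, for $\chi\in\Z_{<0}$ the sign cancels the one already present in the $\chi<0$ branch of the previous proof, giving $\exp([B',A])=\prod_{g=0}^{-\chi-1}(1-q^{(-\chi)-(2g+1)}\omega\zeta)^{-1}=\sum_{k\geq0}(\omega\zeta)^k\qdim(S^k(V))$ via \eqref{eq:gen-f-gr}. Substituting into $\exp(B')\exp(A)=\exp([B',A])\exp(A)\exp(B')$ and noting $\exp(A)\exp(B')=\sum_{s,t\geq0}h^+_{t,i}e^-_{s,j}\zeta^t\omega^s$, I compare coefficients of $\omega^n\zeta^m$ to extract the stated relations; the generation claim is immediate since $\{h_n\}$ and $\{e_n\}$ each generate $\Sy$, hence together they generate $\fh(H^+,H^-)$ as in Theorem~\ref{theo:product-h-q}.

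An alternative, slicker route would be to apply the involution $\Omega$ of Remark~\ref{rem:involution}. By Proposition~\ref{prop:endomorphism}, $\Omega$ is an algebra endomorphism of $\fh(H^+,H^-)$; by Lemma~\ref{lem:change} it sends $h_n\leftrightarrow e_n$; and Lemma~\ref{lem:preserves} shows it preserves the pairing \eqref{eq:general-pairing} (whose $p_\blambda$ are orthogonal). Applying $\Omega^-$ to the $-$ side only — more precisely, applying the endomorphism $\id\otimes\Omega$ — to the relations of Theorem~\ref{theo:product-h-q} should directly yield those of Theorem~\ref{theo:product-e-h-q}, since $\qdim(S^r(V))$ and $\qdim(\Lambda^r(V))$ are just scalars unaffected by $\Omega$, while the sign of $\chi=\langle i,j\rangle$ is unchanged. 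The main obstacle with this shortcut is that $\Omega$ was defined on both tensor factors simultaneously; one must check that applying it to only the $H^-$ factor (equivalently, composing with $\Omega$ on $H^+$ as well and then using that $\Omega(h^+_n)=e^+_n$ generate equally well) still gives an honest presentation — this is a compatibility bookkeeping point rather than a genuine difficulty. I would present the direct generating-function computation as the primary proof, since it parallels Theorem~\ref{theo:product-h-q} verbatim up to the single sign substitution $\omega\zeta\mapsto-\omega\zeta$, and the only place requiring care is tracking that sign through the $\chi\geq0$ versus $\chi<0$ case split so that $S$ and $\Lambda$ come out correctly interchanged.
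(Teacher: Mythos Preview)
Your primary generating-function argument is exactly the paper's proof: the paper too keeps $A(\zeta)$ and replaces $B(\omega)$ by $-\sum_{k\ge1}p^-_{k,j}(-\omega)^k/k$, recomputes the commutator, applies \eqref{eq:exponent-rule}, and identifies $\exp([B,A])$ with the $\Lambda$- (resp.\ $S$-) generating series for $\chi\ge0$ (resp.\ $\chi<0$) before comparing coefficients. One bookkeeping caveat: you assert $B'(\omega)=B(-\omega)$, but in fact $B'(\omega)=-B(-\omega)$, and your displayed intermediate step $[B',A]=\sum_{k,g}\tfrac{1}{k}(-q^{\chi-(2g+1)}\omega\zeta)^k$ is missing an overall minus sign (recall $\sum_{k\ge1}(-x)^k/k=-\log(1+x)$); these two slips happen to cancel, so your formula $[B',A]=-\sum_{k\ge1}(-1)^k\tfrac{[k\chi]}{[k]k}(\omega\zeta)^k$ and the final identification $[B',A]=\log\prod_g(1+q^{\chi-(2g+1)}\omega\zeta)$ are both correct.

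Your instinct about the $\Omega$ shortcut is also right: the paper's $\Omega$ acts on both tensor factors simultaneously (Proposition~\ref{prop:endomorphism}), so it converts the $\{h^+,h^-\}$ presentation into $\{e^+,e^-\}$ and the $\{h^+,e^-\}$ presentation into $\{e^+,h^-\}$, which is precisely how the paper uses it in the remark following the theorem; it does not directly give $\{h^+,e^-\}$ from $\{h^+,h^-\}$.
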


\begin{proof}
  The proof is the same as that of Theorem \ref{theo:product-h-q}, but with
  \begin{align*}
    A(\zeta)&:=\sum_{k\geq1}p^+_{k,i}\frac{\zeta^k}{k}, \qquad B(\omega):=-\sum_{k\geq1}p^-_{k,j}\frac{(-\omega)^k}{k}.\qedhere
  \end{align*}
  \details{
    By Lemma \ref{lem:relgene}, we have
    \[
      \sum_{n,m\geq 0}e^-_{n,j}h^+_{m,i}\omega^n\zeta^m=\exp(B)\exp(A).
    \]
    Let us calculate
    \begin{align*}
      [B,A]&=\left(-\sum_{k\geq1}p^-_{k,j}\frac{(-\omega)^k}{k}\right)\left(\sum_{r\geq1}p^+_{r,i}\frac{\zeta^r}{r}\right)-\left(\sum_{r\geq1}p^+_{r,i}\frac{\zeta^r}{r}\right)\left(-\sum_{k\geq1}p^-_{k,j}\frac{(-\omega)^k}{k}\right) \\
      &=-\sum_{k,r\geq1}(p^-_{k,j}p^+_{r,i}-p^+_{r,i}p^-_{k,j})\frac{(-\omega^k)\zeta^r}{kr} \\
      &\stackrel{\eqref{eq:pairing}}{=}-\sum_{k,r\geq1}\delta_{kr}[k\chi]\frac{k}{[k]r}(-\omega)^k\zeta^r=-\sum_{k\geq1}\frac{[k\chi]}{[k]k}(-\omega\zeta)^k \\
      &=
      \begin{cases}
        -\sum_{k\geq 1}\frac{(-\omega\zeta)^k}{k}\frac{q^{-k\chi}-q^{k\chi}}{q^{-k}-q^k} & \textrm{if }\chi\in\Z_{\geq0},\\
        -\sum_{k\geq 1}\frac{(-\omega\zeta)^k(-1)^{-k\chi+1}}{k} \frac{(-q)^{-k(-\chi)}-(-q)^{k(-\chi)}}{(-q)^{-k}-(-q)^k} & \textrm{if }\chi\in\Z_{<0} \\
      \end{cases}\\
      &=
      \begin{cases}
        -\sum_{k\geq 1}\sum_{g=0}^{\chi-1}\frac{1}{k}(-q^{\chi-(2g+1)}\omega\zeta)^k & \textrm{ if }\chi\in\Z_{\geq0}, \\
        \sum_{k\geq 1}\sum_{g=0}^{(-\chi)-1}\frac{1}{k}(q^{(-\chi)-(2g+1)}\omega\zeta)^k & \textrm{ if }\chi\in\Z_{<0} \\
      \end{cases}\\
      &=\begin{cases}
        \log\left(\prod_{g=0}^{\chi-1}(1+q^{\chi-(2g+1)}\omega\zeta)\right) & \textrm{ if }\chi\in\Z_{\geq0},\\
        \log\left(\prod_{g=0}^{-\chi-1}(1-q^{(-\chi)-(2g+1)}\omega\zeta)^{-1}\right) & \textrm{ if }\chi\in\Z_{<0}.\\
      \end{cases}
    \end{align*}
    By \eqref{eq:exponent-rule}, we have
    \begin{align*}
      \sum_{n,m\geq 0}e^-_{n,j}h^+_{m,i}\omega^n\zeta^m&=
      \begin{cases}
        \sum_{s,t\geq 0}h^+_{t,i}e^-_{s,j}\omega^s\zeta^t\prod_{g=0}^{\chi-1}(1+q^{\chi-(2g+1)}\omega\zeta) & \textrm{ if }\chi\in\Z_{\geq0},\\
        \sum_{s,t\geq 0}h^+_{t,i}e^-_{s,j}\omega^s\zeta^t\prod_{g=0}^{-\chi-1}(1-q^{(-\chi)-(2g+1)}\omega\zeta)^{-1} & \textrm{ if }\chi\in\Z_{<0}.\\
      \end{cases}
    \end{align*}
    Now, since $\qdim(V)=[\chi]=\sum_{g=0}^{\chi-1}q^{\chi-(2g+1)}$, we recall \eqref{eq:gen-f-gr} and \eqref{eq:gen-f-wedge} and thus write
    \[
      \sum_{n,m\geq 0}e^-_{n,j}h^+_{m,i}\omega^n\zeta^m=
      \begin{cases}
        \sum_{s,t\geq 0}h^+_{t,i}e^-_{s,j}\omega^s\zeta^t\sum_{k\geq 0}(\omega\zeta)^k\qdim \left(\Lambda^k(V)\right) & \textrm{ if }\chi\in\Z_{\geq0},\\
        \sum_{s,t\geq 0}h^+_{t,i}e^-_{s,j}\omega^s\zeta^t\sum_{k\geq 0}(\omega\zeta)^k\qdim \left(S^k(V)\right) & \textrm{ if }\chi\in\Z_{<0}.\\
      \end{cases}
    \]
    The proof follows upon comparison of coefficients.
  }
\end{proof}

\begin{rem}
  We can obtain presentations in terms of the generating sets $\{e_n^+, e_n^-\}_{n \in \N}$ and $\{e_n^+, h_n^-\}_{n \in \N}$ immediately by considering the algebra endomorphism $\Omega$ of Proposition~\ref{prop:endomorphism}.
\end{rem}

\begin{rem}
  Since the presentations of Theorems~\ref{theo:product-h-q} and~\ref{theo:product-e-h-q} are in terms of the $h_n$ and $e_n$ which generate $\Sy$ over $\Z$, they in fact yield integral forms of the quantum lattice Heisenberg algebra.  It is for this reason that these generating sets are important in categorification.  In fact, the presentations given in Theorems~\ref{theo:product-h-q} and~\ref{theo:product-e-h-q} correspond to those of \cite[Prop.~5.5]{RS15b}.  However, since the power sums do not seem to have a natural interpretation in the categorical setting of \cite{RS15b}, the connection to the presentation in terms of power sums is not given there.
\end{rem}

Integral presentations of quantum lattice Heisenberg algebras have been considered previously.  For example, in \cite[\textsection 2.2.1]{CL12}, the authors considered such presentations, but with the restriciton that $(\langle i , j\rangle)_{i,j \in I}$ was an extended Cartan matrix of type ADE.  In their paper, they define elements $a_i(n)$, $p_i^{(m)}$, and $q_j^{(m)}$ of the lattice Heisenberg algebra, related by the equations
\begin{align*}
[a_i(m),a_j(n)]&=\delta_{m,-n}[n\langle i,j\rangle]\frac{[n]}{n},\\
\sum_{n\geq0}p^{(n)}_iz^n=\exp\left(\sum_{m\geq 1}\frac{a_i(-m)}{[m]}\right),&\quad
\sum_{n\geq0}q^{(n)}_iz^n=\exp\left(\sum_{m\geq 1}\frac{a_i(m)}{[m]}\right).
\end{align*}
The connection to the notation of the current paper is given by
\[
  p_i^{(n)} = h^+_{n,i},\quad
  q_i^{(n)} = h^-_{n,i},\quad
  a_i(m)\frac{m}{[m]}=
  \begin{cases}
    p^-_{n,i} & \quad\textrm{if } m\in\Z_{\geq0},\\
    p^+_{m,i} & \quad\textrm{if } m\in\Z_{<0}.
  \end{cases}
\]
Thus, the results described in \cite[\textsection 2.2]{CL12} are specific cases of Theorem \ref{theo:product-h-q} and Theorem \ref{theo:product-e-h-q}.  However, because of the restricted setting, \cite{CL12} treats only the case where $\langle i,j \rangle$ is equal to 0, 1, or 2, instead of an arbitrary integer as in the current paper.  In addition, the authors of \cite{CL12} do not consider the algebra from the Heisenberg double point of view, and so their proofs are different.

Setting $q=1$ corresponds to the vector space $V$ appearing in Theorems~\ref{theo:product-h-q} and~\ref{theo:product-e-h-q} being concentrated in degree zero.  Under this specialization, Theorem \ref{theo:product-h-q} recovers \cite[Lem.~2.1]{Kru15}.  Note however, that the Heisenberg double approach of the current paper simplifies the proofs considerably.  In particular, the second part of the proof of \cite[Lem.~1.2]{Kru15}, where the author shows that no further relations exists between the generators, is an immediate consequence of the definition of the Heisenberg double and Lemma~\ref{lem:qha}.

The quantum lattice Heisenberg algebra and its nonquantum $q=1$ specialization are treated from the Heisenberg double point of view in \cite[\textsection 7 and \textsection 8]{SRH}, respectively.  However, as in \cite{CL12}, explicit presentations are only given when $\langle i, j \rangle$ is equal to 0, 1, or 2.

\section*{Acknowledgements}

The author would like to thank the immensely generous and patient supervision of Prof.\ Alistair Savage, facilitated by the great opportunity to conduct a research internship at the University of Ottawa underwritten by the MITACS Globalink program.

If one can be allowed some sentimentality on their first paper, the author would also like to thank his family and friends: \emph{ustedes son mi vida entera}.


\def\cprime{$'$}
\newcommand{\arxiv}[1]{\href{http://arxiv.org/abs/#1}{\tt arXiv:\nolinkurl{#1}}}
\newcommand{\doi}[1]{doi: \href{http://dx.doi.org/#1}{\tt \nolinkurl{#1}}}

\end{document}